\definecolor{ddarkbrown}{rgb}{0.5,0.2,0.05} \definecolor{bbluegray}{rgb}{0.05,0,0.5}
\newcommand{\BEAS}{\begin{eqnarray*}}
\newcommand{\EEAS}{\end{eqnarray*}}
\newcommand{\BEA}{\begin{eqnarray}}
\newcommand{\EEA}{\end{eqnarray}}
\newcommand{\BEQ}{\begin{equation}}
\newcommand{\EEQ}{\end{equation}}
\newcommand{\BIT}{\begin{itemize}}
\newcommand{\EIT}{\end{itemize}}
\newcommand{\BNUM}{\begin{enumerate}}
\newcommand{\ENUM}{\end{enumerate}}
\newcommand{\BA}{\begin{array}}
\newcommand{\EA}{\end{array}}
\newcommand{\ie}{{ i.e.}} 
\newcommand{\reals}{{\mathbb R}}
\newcommand{\argmin}{\mathop{\rm argmin}}
\newcommand{\dom}{\mathop{\bf dom}}
\newcommand{\prox}{\text{\textbf{prox}}}
\newcounter{dummy} \numberwithin{dummy}{section}
\newtheorem{mythm}[dummy]{Theorem}
\newtheorem{mydef}[dummy]{Definition}
\newtheorem{myprop}[dummy]{Proposition}
\numberwithin{mythm}{section}
\numberwithin{mydef}{section}
\numberwithin{myprop}{section}
\newcommand{\scalar}[2]{\langle\,{#1,#2}\rangle}
\begin{document} 

\title{Integration Methods and Accelerated Optimization Algorithms}

\author{\name Damien Scieur  \email damien.scieur@inria.fr\\ 
\name Vincent Roulet \email vicnent.roulet@inria.fr\\ 
\name Francis Bach  \email francis.bach@ens.fr \\ 
\name Alexandre d'Aspremont \email aspremon@ens.fr \\
\addr
INRIA - Sierra Project-team \\
D\'epartement d'Informatique de l'Ecole Normale Sup\'erieure (CNRS - ENS - INRIA) \\
Paris, France}
\editor{}

\maketitle

\begin{abstract}
We show that accelerated optimization methods can be seen as particular instances of multi-step integration schemes from numerical analysis, applied to the gradient flow equation. In comparison with recent advances in this vein, the differential equation considered here is the basic gradient flow and we show that multi-step schemes allow integration of this differential equation using larger step sizes, thus intuitively explaining acceleration results.
\end{abstract}

\section*{Introduction}
\label{sec:intro}
The gradient descent algorithm used to minimize a function $f$ has a well-known simple numerical interpretation as the integration of the gradient flow equation, written
\begin{align}\tag{Gradient Flow} \label{eq:gradflow}
\begin{split}
x(0) & = x_0\\
\dot{x}(t) & = -\nabla f (x(t)),
\end{split}
\end{align}
using Euler's method. This appears to be a somewhat unique connection between optimization and numerical methods, since  these two fields have inherently different goals. On one hand, numerical methods aim to get a precise discrete approximation of the solution $x(t)$ on a finite time interval. More sophisticated methods than Euler's were developed to get better consistency with the continuous time solution but still focus on a finite time horizon~\citep[see for example][]{suli2003introduction}. On the other hand, optimization algorithms seek to find the minimizer of a function, which corresponds to the infinite time horizon of the gradient flow equation. Structural assumptions on $f$ led to more sophisticated algorithms than the gradient method, such as the mirror gradient method \citep[see for example][]{ben2001lectures,beck2003mirror}, proximal gradient method \citep{nesterov2007gradient} or a combination thereof \citep{duchi2010composite,nesterov2015universal}. Among them Nesterov's accelerated gradient algorithm \citep{nesterov1983method} is proven to be optimal on the class of smooth convex or strongly convex functions. This last method was designed with the lower complexity bounds in mind, but the proof relies on purely algebraic arguments and the key mechanism behind acceleration remains elusive, which led to various interpretations of it \citep{Bubeck15,AllenZhu2014LinearCA,lessard2016analysis}.

A recent stream of papers recently used differential equations to model the acceleration behavior and offer a better interpretation of Nesterov's algorithm \citep{su2014differential,krichene2015accelerated,wibisono2016variational,wilson2016lyapunov}.
However, the differential equation is often quite complex, being reverse-engineered from Nesterov's method itself, thus losing the intuition. Moreover, integration methods for these differential equations are often ignored or are not derived from standard numerical integration schemes, because the convergence proof of the algorithm does not require the continuous time interpretation. 

Here, we take another approach. Rather than using a complicated differential equation, we use advanced multi-step methods to discretize the basic gradient flow equation in~\eqref{eq:gradflow}. These lesser known methods, developed decades ago by numerical analysts, directly correspond to various well-known optimization algorithms. In particular, Nesterov's method can be seen as a stable and consistent gradient flow discretization scheme that allows bigger step sizes in integration, leading to faster convergence. 

The paper is organized as follows. In Section~\ref{sec:gradflow} we present our setting and recall classical results on differential equations. We then review definitions and theoretical properties of integration methods called linear multi-step methods in Section~\ref{sec:integrationOde}. Linear one-step and two-step methods are detailed in Section~\ref{sec:some_multistep}  and linked to optimization algorithms in Section~\ref{sec:links} (strongly convex case) and Section~\ref{sec:convex} (convex case). Finally, we propose several extensions for \eqref{eq:gradflow}, which can be used to explain proximal methods (Section \ref{sec:prox}), non-euclidean method (Section \ref{sec:noneuclidean}) or a combination thereof (Section \ref{sec:generalized}).

\section{Gradient flow}
\label{sec:gradflow}
We seek to minimize a $L$-smooth $\mu$-strongly convex function $f$ defined on $\reals^d$ and look in that purpose at the discretization the gradient flow equation \eqref{eq:gradflow}, given by the following ordinary differential equation (ODE)
\begin{align}
\begin{split}
\dot x(t) &= g(x(t))  \\
x(0) &= x_0 ,
\end{split}
\tag{ODE} \label{eq:ode}
\end{align}
where $g$ comes from a potential $-f$, meaning $g = -\nabla f$. Smoothness of $f$ means Lipschitz continuity of $g$, \ie~
\[
\|g(x)-g(y)\| \leq L\|x-y\|, \quad \mbox{for every $x,y\in\reals^d$},
\]
where $\|.\|$ is the Euclidean norm. This property ensures the existence and uniqueness of the solution of \eqref{eq:ode} (see \citep[Theorem 12.1]{suli2003introduction}). Strong convexity of $f$ means strong monotonicity of $-g$, \ie,
\[
\mu \|x-y\|^2 \leq -\langle x-y, g(x)-g(y)\rangle, \quad \mbox{for every $x,y\in\reals^d$},
\]
and ensures that~\eqref{eq:ode} has a unique point $x^*$ such that $g(x^*)= 0$, called the equilibrium. This is the minimizer of $f$ and the limit point of the solution,\ie,~$x(\infty) = x^*$. In the numerical analysis literature, strong monotonicity of $-g$ is referred to as one-sided Lipschitz-continuity of $g$. Whatever its name, this property essentially contracts solutions of~\eqref{eq:ode} with different initial points as showed in the following proposition.
\begin{myprop}
	Assume that $x_1(t),x_2(t)$ are solutions of \ref{eq:ode} with different initial conditions, where $-g$ is strongly monotone.
	Then, $x_1(t)$ and $x_2(t)$ get closer as $t$ increases, precisely,
	\[
	\|x_1(t)-x_2(t)\|^2 \leq e^{-2\mu t}\|x_1(0)-x_2(0)\|^2.
	\]
\end{myprop}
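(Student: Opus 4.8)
The plan is to study the time evolution of the squared distance between the two solutions and show it decays exponentially. Define the scalar function
\[
\phi(t) = \|x_1(t)-x_2(t)\|^2 = \langle x_1(t)-x_2(t),\, x_1(t)-x_2(t)\rangle.
\]
Since both $x_1$ and $x_2$ solve \eqref{eq:ode}, they are differentiable in $t$, so $\phi$ is differentiable and I can compute its derivative directly. First I would differentiate $\phi$ using the product rule for the inner product, obtaining
\[
\dot\phi(t) = 2\,\langle x_1(t)-x_2(t),\, \dot x_1(t)-\dot x_2(t)\rangle
           = 2\,\langle x_1(t)-x_2(t),\, g(x_1(t))-g(x_2(t))\rangle,
\]
where the second equality substitutes the ODE $\dot x_i = g(x_i)$.

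Next I would invoke strong monotonicity of $-g$, which in the notation of the excerpt reads $\mu\|x-y\|^2 \leq -\langle x-y, g(x)-g(y)\rangle$ for all $x,y$. Applying this with $x=x_1(t)$ and $y=x_2(t)$ gives
\[
\langle x_1(t)-x_2(t),\, g(x_1(t))-g(x_2(t))\rangle \leq -\mu\,\|x_1(t)-x_2(t)\|^2 = -\mu\,\phi(t),
\]
and therefore the differential inequality
\[
\dot\phi(t) \leq -2\mu\,\phi(t).
\]

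To finish, I would integrate this inequality via a standard Grönwall-type argument: consider the function $\psi(t) = e^{2\mu t}\phi(t)$, whose derivative is $\dot\psi(t) = e^{2\mu t}(\dot\phi(t) + 2\mu\,\phi(t)) \leq 0$ by the inequality above. Hence $\psi$ is nonincreasing, so $\psi(t)\leq\psi(0)$, which upon unwinding the definition of $\psi$ yields exactly
\[
\|x_1(t)-x_2(t)\|^2 = \phi(t) \leq e^{-2\mu t}\phi(0) = e^{-2\mu t}\|x_1(0)-x_2(0)\|^2,
\]
the claimed bound. I do not anticipate a genuine obstacle here: the argument is a textbook contraction estimate, and the only points requiring mild care are justifying differentiability of $\phi$ (which follows from the existence and uniqueness/regularity of solutions guaranteed by the Lipschitz hypothesis cited in the excerpt) and correctly tracking the sign in the strong monotonicity inequality so that the factor of $2\mu$ lands with the right sign in the exponent.
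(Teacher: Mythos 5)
Your proposal is correct and follows essentially the same argument as the paper: differentiate the squared distance, substitute the ODE, apply strong monotonicity to obtain $\dot\phi(t) \leq -2\mu\,\phi(t)$, and integrate. The only difference is cosmetic --- you spell out the Gr\"onwall step via $\psi(t) = e^{2\mu t}\phi(t)$, whereas the paper integrates the differential inequality directly.
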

\begin{proof}
	Let $\mathcal{L}(t) = \|x_1(t)-x_2(t)\|^2$ If we derive $\mathcal{L}(t)$ over time,
	\BEAS
	\frac{\dif}{\dif t} \mathcal{L}(t) & = & 2\scalar{x_1(t)-x_2(t)}{\dot x_1(t)-\dot x_2(t)} \\
	& = & 2\scalar{x_1(t)-x_2(t)}{g(x_1(t))-g(x_2(t))}\\
	& \leq & -2\mu\mathcal{L}(t).
	\EEAS
	We thus have $\mathcal{L}(t) \leq e^{-2\mu t} \mathcal{L}(0)$, which is the desired result.
\end{proof}Strong convexity allows us to control the convergence rate of the potential $f$ and the solution $x(t)$ as recalled in the following proposition.
\begin{restatable}{myprop}{rate_cont_strconv}
	\label{prop:rate_cont_strconv} 
	Let $f$ be a $L$-smooth and $\mu$-strongly convex function and $x_0 \in \dom(f)$. Writing $x^*$ the minimizer of $f$, the solution $x(t)$ of \eqref{eq:gradflow} satisfies 
	\BEA
	f(x(t))-f(x^*) &\leq & (f(x_0)-f(x^*))e^{-2\mu t} \label{eq:rate_f_cont_strconv} 
	\\
	\|x(t) - x^*\| &\leq & \|x_0-x^*\| e^{-\mu t}. \label{eq:rate_normx_cont_strconv}
	\EEA
\end{restatable}
\begin{proof}
	Indeed, if we derive the left-hand-side of \eqref{eq:rate_f_cont_strconv},
	\BEAS
	\frac{\dif}{\dif t}[f(x(t))-f(x^*)] = \scalar{\nabla f(x(t))}{\dot x(t)} = -\|f'(x(t))\|^2.
	\EEAS
	Using that $f$ is strongly convex, we have (see \cite{nesterov2013introductory})
	\[
	f(x) - f(x^*) \leq  \frac{1}{2\mu} \|\nabla f(x)\|^2, 
	 \]
	and therefore
	\[
	\frac{\dif}{\dif t}[f(x(t))-f(x^*)] \leq -2 \mu [f(x(t))-f(x^*)].
	\]
	Solving this differential equation leads to the desired result. We can apply a similar technique for the proof of \eqref{eq:rate_normx_cont_strconv}, using that
	\[
	\mu \|x-y\|^2 \leq \scalar{\nabla f(x) - \nabla f(y)}{x-y}, 
	\]
	for strongly convex functions (see again \cite{nesterov2013introductory}).
\end{proof}
We now focus on numerical methods to integrate   \eqref{eq:ode}.

\section{Numerical integration of differential equations} 
\label{sec:integrationOde}

In general, we do not have access to an explicit solution $x(t)$ of~\eqref{eq:ode}. We thus use integration algorithms to approximate the curve $(t,x(t))$ by a grid $(t_k,x_k) \approx (t_k,x(t_k))$ on a finite interval $[0,t_{\max}]$. For simplicity here,  we assume the step size $h_k = t_{k}-t_{k-1}$ constant, i.e., $h_k = h$ and $t_k = kh$. 
The goal is then to minimize the approximation error $\|x_k-x(t_k)\|$ for $k \in [0,t_{\max}/h]$. We first introduce Euler's explicit method to illustrate this on a basic example.

\subsection{Euler's explicit method}
\label{ssec:euler_method}
Euler's explicit method is one of the oldest and simplest schemes for integrating the curve $x(t)$. The idea stems from the Taylor expansion of $x(t)$ which reads
\[
x(t+h) = x(t) + h\dot x(t) + O(h^2).
\]
When $t = kh$, Euler's explicit method approximates $x(t+h)$ by $x_{k+1}$ by neglecting the second order term,
\[
x_{k+1} = x_k + h g(x_k).
\]
In optimization terms, we recognize the gradient descent algorithm used to minimize $f$. Approximation errors in an integration method accumulate with iterations, and as Euler's method uses only the last point to compute the next one, it has only limited control over the accumulated error.

\subsection{Linear multi-step methods} \label{ssec:linear_multistep}
Multi-step methods use a combination of several past iterates to improve convergence. Throughout the paper, we focus on {\em linear} $s$-step methods whose recurrence can be written 
\BEQ
x_{k+s} = -\sum_{i=0}^{s-1}\rho_i x_{k+i} + h\sum_{i=0}^{s}\sigma_i g(x_{k+i}), \quad \mbox{for $k\geq 0$},
\EEQ
where $\rho_i, \sigma_i \in \reals$ are the parameters of the multi-step method and $h$ is again the step size. Each new point $x_{k+s}$ is a function of the information given by the $s$ previous points. If $\sigma_s = 0$, each new point is given explicitly by the $s$ previous points and the method is then called \textbf{explicit}. Otherwise each new point requires solving an implicit equation and the method is then called \textbf{implicit}.

To simplify notations we use the shift operator $E$, which maps $Ex_k \rightarrow x_{k+1}$. Moreover, if we write $g_k = g(x_k)$, then the shift operator also maps $Eg_k \rightarrow g_{k+1}$. Recall that a univariate polynomial is called monic if its leading coefficient is equal to 1. We now give the following concise definition of $s$-step linear methods.
\begin{mydef}
	A \textbf{linear $s$-step method} integrates an \eqref{eq:ode} defined by $g, x_0$, using a step size $h$ and $x_1,\ldots,x_{s-1}$ starting points by generating a sequence $x_k$ that satisfies
	\[
	\rho(E)x_k = h\sigma(E)g_k, \quad \mbox{for every $k\geq 0$},
	\]
	where $\rho$ is a monic polynomial of degree $s$ with coefficients $\rho_i$, and $\sigma$ a polynomial of degree $s$ with coefficients $\sigma_i$.
\end{mydef}

A linear $s-$step method is uniquely defined by the polynomials $(\rho,\sigma)$. The sequence generated by the method then depends on the starting points and the step size. 
Each linear multi step method has a twin sister, called \emph{one-leg} method, which generates the sequence $\tilde x_k$ following
\[
	\rho(E)\tilde x_k = \sigma(1) h g\left( \frac{\sigma(E)}{\sigma(1)}\tilde x_k \right).
\]
It is possible to show a bijection between $x_k$ and $\tilde x_k$ \citep{dahlquist1983one}. We quickly mention one-leg methods in Section \ref{sec:prox}  but we will not go into more details here. 
We now recall a few results describing the performance of multi-step methods.

\subsection{Stability}

Stability is a key concept for integration methods. First of all, consider two curves $x(t)$ and $y(t)$, both solutions of an \eqref{eq:ode} defined by $g$, but starting from different points $x(0)$ and $y(0)$. If the function $g$ is Lipchitz-continuous, it is possible to show that the distance between $x(t)$ and $y(t)$ is bounded on a finite interval, i.e.
\[
	\|x(t)-y(t)\| \leq C \|x(0)-y(0)\| \qquad \forall t \in [0,t_{\max}],
\]
where $C$ may depend exponentially on $t_{\max}$. We would like to have a similar behavior for our sequences $x_k$ and $y_k$, approximating  $x(t_k)$ and $y(t_k)$, i.e.
\BEA \label{eq:stability_continuous}
	\|x_{k}-y_k\| \approx 	\|x(t_k)-y(t_k)\| \leq C \|x(0)-y(0)\| \qquad \forall k \in [0,t_{\max}/h],
\EEA
when $h\rightarrow 0 $, so $k\rightarrow \infty$.
Two issues quickly arise, namely:
\begin{itemize}
	\item For a linear $s$-step method, we need $s$ starting values $x_{0},...,x_{s-1}$. Condition \eqref{eq:stability_continuous} will therefore depend on all these starting values and not only $x_0$. 
	\item Any discretization scheme introduce at each step an approximation error, called local error, which is accumulated over time. We denote this error by $\epsilon_{\mathrm{loc}}(x_{k+s})$ and define it as
	\[
		\epsilon_{\mathrm{loc}}(x_{k+s}) \triangleq x_{k+s}-x(t_{k+s})
	\]
if $x_{k+s}$ is computed using the real solution $x(t_{k}),...,x(t_{k+s-1})$.
\end{itemize}
In other words, the difference between $x_k$ and $y_k$ can be described as follows
\[
	\|x_{k}-y_k\|~ \leq ~\text{Error in the initial condition} + \text{Accumulation of local errors}.
\]
We now write a complete definition of stability, inspired from Definition 6.3.1 from \citet{gautschi2011numerical}.
\begin{mydef}
	A linear multi-step method is stable if, for two sequences $x_k$, $y_k$ generated by $(\rho,\sigma)$ using any sufficiently small step size $h>0$, from the starting values $x_0,...,x_{s-1}$, and $y_{0},...,y_{s-1}$, we have
	\BEQ
		\label{eq:def_stability}
		\|x_k-y_k\| \leq C \left( \max_{i \in \{0,...,s-1\}} \|x_i-y_i\| + \sum_{i=1}^{t_{\max}/h} \|\epsilon_{\mathrm{loc}}(x_{i+s})-\epsilon_{\mathrm{loc}}(y_{i+s})\| \right) ,
	\EEQ
for any $ k \in [0,t_{\max}/h]$. Here, the constant $C$ may depend on $t_{\max}$ but is independent of $h$. 
\end{mydef}
When $h$ tends to zero, we may recover equation \eqref{eq:stability_continuous} only if the accumulated local error tends also to zero. 
We thus need
\[
	\lim\limits_{h\rightarrow 0} \; \frac{1}{h} \; \|\epsilon_{\mathrm{loc}}(x_{i+s})-\epsilon_{\mathrm{loc}}(y_{i+s})\| = 0 \quad \forall i \in [0,t_{\max}/h].
\]
This condition is called \emph{consistency}. Once this condition satisfied, we still need to ensure
\BEQ \label{eq:zero-stability}
	\|x_k-y_k\| \leq C \max_{i \in \{0,...,s-1\}} \|x_i-y_i\|,
\EEQ
and this condition is called \emph{zero-stability}.

\subsubsection{Truncation error and consistency}\label{ssec:consistency}
The truncation error of a linear multi-step method is a measure of the local error $\epsilon_{\mathrm{loc}}(x_k)$ made by the method, normalized by $h$. More precisely it is defined using the difference between the step performed by the algorithm and the step which reaches exactly $x(t_{k+s})$, with
\BEQ\label{def:trunc}
T(h) \triangleq \frac{x(t_{k+s})-x_{k+s}}{h}  \qquad \text{assuming }x_{k+i} = x(t_{k+i}), \; i=0,\ldots,s-1.
\EEQ
This definition does not depend on $k$ but on the recurrence of the linear $s$-step method and on the \eqref{eq:ode} defined by $g$ and $x_0$. We can use this truncation error to define consistency. 
\begin{mydef}
An integration method for an \eqref{eq:ode} defined by $g, x_0$ is consistent if and only if, for any initial condition $x_0$,
	\[
	\lim\limits_{h\rightarrow 0} \|T(h)\| = 0.
	\]
\end{mydef}

The following proposition shows there exist simple conditions to check consistency, which rely on comparing a Taylor expansion of the solution with the coefficients of the method.

\begin{restatable}{myprop}{consistency} \label{prop:consistency}
	A linear multi-step method defined by polynomials $(\rho,\sigma)$ is consistent if and only if
	\BEQ
	\rho(1) = 0 \qquad \text{and} \qquad \rho'(1) = \sigma(1). \label{eq:consistency}
	\EEQ
\end{restatable}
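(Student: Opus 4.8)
The plan is to compute the truncation error $T(h)$ explicitly by substituting the exact solution into the recurrence, Taylor-expanding, and then reading off the conditions under which $\|T(h)\|\to 0$. First I would write the scheme in expanded form, $\sum_{i=0}^s \rho_i x_{k+i} = h\sum_{i=0}^s \sigma_i g_{k+i}$ with $\rho_s=1$ (the polynomial $\rho$ is monic). Setting $x_{k+i}=x(t_{k+i})$ for $i=0,\dots,s-1$ as required by the definition of $T(h)$, and using $\rho_s=1$ to absorb $x(t_{k+s})$ into the sum, the numerator of $T(h)$ becomes
\[
x(t_{k+s}) - x_{k+s} = \sum_{i=0}^s \rho_i\, x(t_{k+i}) - h\sum_{i=0}^s \sigma_i\, g_{k+i}.
\]

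The one subtlety is the implicit term $g_{k+s}=g(x_{k+s})$, which still refers to the unknown $x_{k+s}$. I would handle it by replacing it with $g(x(t_{k+s}))=\dot x(t_{k+s})$ and controlling the discrepancy through Lipschitz continuity of $g$: since $\|g(x_{k+s})-g(x(t_{k+s}))\|\le L\|x_{k+s}-x(t_{k+s})\| = Lh\|T(h)\|$, one gets $\|T(h)\|\le\|\bar T(h)\| + |\sigma_s|Lh\|T(h)\|$, where $\bar T(h)$ is the ``exact-solution'' truncation error in which every $g_{k+i}$ is replaced by $\dot x(t_{k+i})$. For $h$ small enough the factor $1-|\sigma_s|Lh$ is positive, so $T(h)$ and $\bar T(h)$ share the same limit and it suffices to analyze $\bar T(h)$.

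Next I would Taylor-expand $x(t_{k+i})=x(t_k+ih)$ and $\dot x(t_{k+i})$ around $t_k$ and collect powers of $h$, using the identities $\sum_i \rho_i=\rho(1)$, $\sum_i i\rho_i=\rho'(1)$, and $\sum_i \sigma_i=\sigma(1)$. The zeroth and first order terms give
\[
\bar T(h) = \frac{\rho(1)}{h}\, x(t_k) + \big(\rho'(1)-\sigma(1)\big)\, \dot x(t_k) + O(h).
\]
From here the equivalence is immediate. The $1/h$ term blows up unless $\rho(1)=0$ (take any nonzero initial point); once $\rho(1)=0$, the surviving constant term $\big(\rho'(1)-\sigma(1)\big)\dot x(t_k)$ carries no power of $h$, so it must vanish identically for $\bar T(h)\to 0$, forcing $\rho'(1)=\sigma(1)$; conversely, both conditions together leave $\bar T(h)=O(h)\to 0$.

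The main obstacle, and really the only nonroutine point, is the implicit term: making rigorous that $g(x_{k+s})$ can be swapped for $\dot x(t_{k+s})$ without changing the limit, which is where Lipschitz continuity of $g$ enters. Everything else is a bookkeeping Taylor expansion. I would also be mildly careful with the quantifier ``for any initial condition'': it is precisely what lets me treat $x(t_k)$ and $\dot x(t_k)$ as independent arbitrary vectors and conclude that \emph{both} coefficients must vanish, rather than only some combination of them along a single trajectory.
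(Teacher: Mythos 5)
Your proof is correct and takes essentially the same route as the paper — substitute the exact solution into the recurrence, Taylor-expand, and read off the $h^{-1}$ and $O(1)$ coefficients — and it is in fact more careful than the paper on the one delicate point: the paper silently writes $g(x(t_{k+s}))$ in place of the implicit term $g(x_{k+s})$, whereas your Lipschitz bound $\|T(h)\| \leq \|\bar T(h)\| + |\sigma_s| L h \|T(h)\|$ actually justifies that substitution. One small caveat on your closing remark: $x(t_k)$ and $\dot x(t_k)$ cannot be treated as independent arbitrary vectors, since $\dot x(t_k) = g(x(t_k))$ is determined by $x(t_k)$ and by the fixed $g$; your argument does not need this, however, because the two conditions decouple through their different orders in $h$ (the $h^{-1}$ term dominates any bounded term), with the understanding — implicit in the paper's proof as well — that $g$ does not vanish identically, so the $O(1)$ coefficient can be tested against some nonzero $g(x_0)$.
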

\begin{proof}
	Assume $t_k = kh$. If we expand $g(x(t_k))$ we have
	\[
	g(x(t_k)) = g(x_0) + O(h).
	\]
	If we do the same thing with $x(t_k)$, we have
	\[
	x(t_k) = x_0 + kh \dot x(t_0) + O(h^2) = x_0 + kh g(x_0) + O(h^2).
	\]
	If we plug these results in the linear multi-step method,
\BEAS
	T(h) & = &\frac{1}{h}\left( x(t_{k+s}) +\sum_{i=0}^{s-1}\rho_i x(t_{k+i}) - h\sum_{i=0}^{s}\sigma_i g(x(t_{k+i})) \right) \\
	& = &\frac{1}{h} \rho(1) x_0 + (\rho'(1)-\sigma(1)) g(x_0) + O(h).
\EEAS
	The limit is equal to zero if and only if we satisfy \eqref{eq:consistency}.
\end{proof}
Consistency is crucial for integration methods, we give some intuition about how important are conditions defined in \eqref{eq:consistency}.

\paragraph{First condition, $\rho(1) = 0$.} If the condition is not satisfied, then the method exhibits an artificial gain or damping. Assume we start at some equilibrium $x^*$ of \ref{eq:ode} (i.e. $\nabla f(x^*) =0$), so $x_{i}=x^*$ for the first $s-1$ steps. The next iterate becomes
\[
x_s = -\sum_{i=0}^{s-1} \rho_{i} x^* + h \sigma(E) \underbrace{ g(x^*) }_{ =0 },
\]
and if $1+\sum_{i=0}^{s} \rho_{i} = \rho(1)\neq 0$, we have that the next iterate $x_s$ is different from $x^*$.

\paragraph{Second condition, $\rho'(1) = \sigma(1)$.} If this relation is not satisfied, we actually are integrating another equation than \eqref{eq:ode}. Assuming the first condition satisfied, $1$ is a root of $\rho$. Consider then the factorization
\[
\rho(z) = (z-1) \tilde \rho(z)
\]
where $\tilde \rho$ is a polynomial of degree $s-1$, and $\rho'(1) = \tilde \rho (1)$. The linear multi-step method becomes
\[
\tilde\rho(E) (y_{k+1}-y_k)  = h\sigma(E) g(y_k).
\]
If we sum up the above equation from the initial point, we get
\[
\tilde\rho(E) (y_{k}-y_0)  = \sigma(E) G_k,
\]
where $G_k = \sum_{i=0}^k hg(y_i)$. If $h$ goes to zero, our iterates $y_k$ converge to some continuous curve $c(t)$, and $G_k \rightarrow \int_{0}^{t} g(c(\tau)) \dif \tau$,
\[
\sum_{i=0}^s \tilde{\rho}_i (c(t)-x(0)) = \sum_{i=0}^{s-1} \sigma_i\int_{0}^{t} g(c(\tau)) \dif \tau.
\]
If we take the derivative over time, we get
\[
\tilde\rho(1)\dot c(t) = \sigma(1) g(c(t)) \quad
\Leftrightarrow \quad \rho'(1) \dot c(t) = \sigma(1) g(c(t)).
\]
which is different from the ODE we wanted to discretize, unless $\rho'(1)=\sigma(1)$.

\subsubsection{Zero-stability and root condition\label{ssec:zero_stability}}

To get stability, assuming consistency holds as above, we also need to satisfy the zero-stability condition \eqref{eq:zero-stability}, which characterizes the sensitivity of a method to initial conditions. Actually, the name comes from an interesting fact: analyzing the special case where $g = 0$ is completely equivalent to the general case, as stated in the \emph{root condition theorem}. The analysis becomes simplified, and reduces to standard linear algebra results because we only need to look at the solution of the homogeneous difference equation $\rho(E) x_k = 0$.
\begin{mythm}[Root condition] \label{theo:root_condition}
	Consider a linear multi-step method $(\rho,\sigma)$. The method is zero-stable if and only if all roots of $\rho$ are in the unit disk, and the roots on the unit circle are simple.
\end{mythm}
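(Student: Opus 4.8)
The plan is to reduce the zero-stability inequality to a statement about the asymptotic growth of solutions of the homogeneous difference equation $\rho(E)e_k=0$, and then to translate that growth condition into the algebraic root condition via standard linear algebra. First I would set $e_k = x_k - y_k$ for two sequences produced by the method; subtracting the two recurrences gives $\rho(E)e_k = h\sigma(E)(g(x_k)-g(y_k))$, a forced linear difference equation whose forcing term is $O(h)$ thanks to Lipschitz continuity of $g$. Summing the accumulated local errors and using a discrete Gronwall argument (the number of steps $k$ is at most $t_{\max}/h$, so $kh\le t_{\max}$ stays bounded as $h\to 0$), the contribution of the right-hand side to $\|e_k\|$ is controlled by a constant depending only on $t_{\max}$, uniformly in $h$, precisely when the homogeneous solution operator is bounded. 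This is the rigorous content of the assertion that the case $g=0$ is equivalent to the general case, and it isolates the real question: are all solutions of $\rho(E)e_k = 0$ bounded by a constant multiple of their $s$ initial values?

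Next I would recast the scalar homogeneous recurrence as a one-step linear iteration. Writing the state vector $E_k = (e_k,\dots,e_{k+s-1})$ and using that $\rho$ is monic, the recurrence $\rho(E)e_k=0$ becomes $E_{k+1}=AE_k$, where $A$ is the companion matrix of $\rho$; hence $E_k = A^k E_0$, and boundedness of all solutions is exactly the power-boundedness $\sup_{k\ge 0}\|A^k\|<\infty$. I would then invoke the Jordan-form characterization of power-bounded matrices: $\{A^k\}$ is bounded if and only if every eigenvalue of $A$ lies in the closed unit disk and every eigenvalue of modulus one is non-defective (semisimple), because a Jordan block $J(\lambda,m)$ satisfies $\|J(\lambda,m)^k\|\sim k^{m-1}|\lambda|^k$, which stays bounded only when $|\lambda|<1$, or when $|\lambda|=1$ and $m=1$.

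The final step connects this spectral condition to the roots of $\rho$. The eigenvalues of the companion matrix $A$ are precisely the roots of $\rho$, and because a companion matrix is nonderogatory (its minimal polynomial equals its characteristic polynomial $\rho$), each eigenvalue corresponds to a single Jordan block whose size equals the multiplicity of the root. Consequently an eigenvalue of modulus one is non-defective if and only if it is a simple root of $\rho$, and the power-boundedness criterion becomes exactly ``all roots of $\rho$ lie in the unit disk, and those on the unit circle are simple.'' For the converse I would argue contrapositively: if the root condition fails, pick a root $r$ with $|r|>1$, or with $|r|=1$ and multiplicity at least two, seed the initial values along the corresponding mode $k^{p}r^{k}$, and take $g=0$ so that the accumulated local error vanishes; the resulting sequence $e_k$ then grows without bound while $\max_{i<s}\|e_i\|$ is fixed, violating \eqref{eq:zero-stability}.

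I expect the main obstacle to be the reduction itself rather than the linear algebra: making the passage from the nonlinear, $h$-dependent zero-stability inequality to the purely homogeneous growth condition fully rigorous requires a careful discrete Gronwall estimate that keeps the constant $C$ independent of $h$ while absorbing both the initial-condition error and the summed local errors. Once that reduction is in place, reading off the growth rates of Jordan blocks and exploiting the nonderogatory structure of the companion matrix is routine.
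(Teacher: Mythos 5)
You should first note that the paper itself does not prove this theorem; it defers entirely to Theorem 6.3.4 of \citet{gautschi2011numerical}, so there is no in-paper argument to compare against. Your proposal reconstructs what is essentially the standard textbook proof found in such references, and it is correct in outline: the reduction of zero-stability to boundedness of solutions of $\rho(E)e_k=0$, the passage to the companion matrix $A$ with $E_k=A^kE_0$, the characterization of power-boundedness via Jordan blocks, the observation that a companion matrix is nonderogatory (so an eigenvalue of modulus one is semisimple if and only if it is a simple root of $\rho$), and the contrapositive construction with $g=0$ seeding a growing mode $k^p r^k$ are all the right ingredients, correctly assembled.

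Two points deserve care to make the reduction fully rigorous, and you flag only the first implicitly. First, for an implicit method ($\sigma_s\neq 0$) the forcing term $h\sigma(E)\left(g(x_k)-g(y_k)\right)$ contains $e_{k+s}$ itself, so before applying the discrete Gronwall lemma you must take $h$ small enough that $hL|\sigma_s|<1$ and absorb that term into the left-hand side; this is consistent with the definition's restriction to ``sufficiently small'' step sizes, but it should be stated, since otherwise the one-step recursion $E_{k+1}=AE_k+hB_k$ is not well defined. Second, in the necessity direction the violation of \eqref{eq:zero-stability} is not that $\|e_k\|$ is infinite for some fixed $h$, but that the constant $C$ must be independent of $h$ while the admissible index range $k\le t_{\max}/h$ grows without bound as $h\to 0$: for any candidate $C$ the mode $k^p r^k$ (with $|r|>1$, or $|r|=1$ and $p\ge 1$) eventually exceeds $C\max_{i<s}\|e_i\|$ within the allowed range once $h$ is small enough. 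With these two clarifications your argument is complete and matches the proof strategy of the cited reference.
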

The proof of this theorem is technical and can be found as Theorem 6.3.4 of \citet{gautschi2011numerical}.

\subsection{Convergence of the global error and Dahlquist's equivalence theorem}\label{ssec:global_error}
Numerical analysis focuses on integrating an ODE on a finite interval of time $[0,t_{\max}]$. It studies the behavior of the global error defined by the difference between the scheme and the solution of the ODE, i.e., $x(t_k)-x_k$, as a function of the step size $h$. If the global error converges to $0$ with the step size, the method is guaranteed to approximate correctly the ODE on  the time interval, for $h$ small enough. The faster it converges with $h$, the less points we need to guarantee a global error within a given accuracy. 

We now state \emph{Dahlquist's equivalence theorem}, which shows that the global error converges to zero when $h$ does if the method is \emph{stable}, i.e., when the method is \emph{consistent} and \emph{zero-stable}. This naturally needs the additional assumption that the starting values $x_0,\ldots,x_{s-1}$ are computed such that they converge to the solution $(x(0),\ldots,x(t_{s-1}))$. 

\begin{mythm}[Dahlquist's equivalence theorem] \label{theo:equivalence_theo}
	Given an \eqref{eq:ode} defined by $g$ and $x_0$ and a consistent linear multi-step method $(\rho,\sigma)$, whose starting values are computed such that $\lim_{h \rightarrow 0 } x_i = x(t_i)$ for any $i \in \{0,\ldots,s-1\}$, zero-stability is necessary and sufficient for being convergent, i.e., $x(t_k)-x_k$ tends to zero for any $k$ when the step size $h$ tends to zero.
\end{mythm}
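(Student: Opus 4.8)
\emph{Sufficiency.} The plan is to prove the two implications of the equivalence separately. For the sufficiency direction I would assume the method is consistent and zero-stable, establish the full stability estimate \eqref{eq:def_stability}, and then read off convergence from it. The engine is the root condition (Theorem~\ref{theo:root_condition}): rewriting $\rho(E)x_k=h\sigma(E)g_k$ in companion-matrix form, zero-stability makes the powers of the companion matrix of $\rho$ uniformly bounded, so every solution of the homogeneous equation $\rho(E)\phi_k=0$ stays bounded as $k\to\infty$. By a discrete variation-of-constants formula this boundedness transfers to the inhomogeneous recurrence, bounding a general solution by its $s$ initial values plus the accumulated right-hand-side perturbations, which is exactly the content of \eqref{eq:def_stability}. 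I would then apply \eqref{eq:def_stability} to the numerical sequence $x_k$ and to $y_k=x(t_k)$, the exact solution sampled on the grid, which satisfies the same recurrence up to the local error $\epsilon_{\mathrm{loc}}$ at each step. The initial-mismatch term $\max_i\|x_i-x(t_i)\|$ vanishes as $h\to0$ by the hypothesis on the starting values, while consistency gives $\|\epsilon_{\mathrm{loc}}\|=h\|T(h)\|=o(h)$, so the accumulated-error term is bounded by $t_{\max}\,\max\|T(h)\|\to0$. Hence $\|x_k-x(t_k)\|\to0$, i.e. the method converges.

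\emph{Necessity.} For the converse I would argue by contradiction on the trivial instance $\dot x=0$, $x_0=0$, whose exact solution is identically zero and on which the scheme collapses to the homogeneous recurrence $\rho(E)x_k=0$. If zero-stability failed, the root condition would be violated in one of two ways, and in each case I would exhibit starting values tending to zero with $h$ whose generated iterates do not converge to the zero solution at a fixed time $t=kh$. For a root $\zeta$ of $\rho$ with $|\zeta|>1$, take $x_j=h\zeta^j$: then $x_k=h\zeta^k$, so at fixed $t$ one has $|x_k|=h|\zeta|^{t/h}\to\infty$. For a root $\zeta$ on the unit circle of multiplicity at least two, take $x_j=hj\zeta^j$: then $x_k=hk\zeta^k$, so $|x_k|=t|\zeta|^{t/h}=t\neq0$. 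Real sequences are obtained by passing to real and imaginary parts, $\rho$ having real coefficients. Either way convergence is contradicted, so a convergent method must satisfy the root condition, that is, be zero-stable.

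\emph{Main obstacle.} The crux is the sufficiency direction, and within it the step that turns the root condition into the estimate \eqref{eq:def_stability}. Two things have to be handled carefully: first, converting uniform boundedness of the homogeneous solutions into a clean bound on the inhomogeneous recurrence; and second, closing the loop against the $g$-dependent term, since the perturbation driving the error recurrence contains $g(x_k)-g(x(t_k))$, controlled only by the Lipschitz bound $\|g(x_k)-g(x(t_k))\|\le L\|x_k-x(t_k)\|$ and hence proportional to the very error being bounded. Absorbing this term requires a discrete Gr\"onwall inequality, which is precisely where the constant $C$ in \eqref{eq:def_stability} acquires its $e^{O(t_{\max})}$ dependence on the horizon. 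By comparison the necessity direction is elementary once the two counterexamples are in hand.
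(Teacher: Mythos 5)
Your proposal is correct: the paper itself gives no proof of this theorem, deferring instead to \citet{gautschi2011numerical}, and your sketch reproduces exactly the classical argument found there and in similar texts --- the root condition yielding a stability bound via companion-matrix boundedness plus a discrete Gr\"onwall absorption of the Lipschitz perturbation for sufficiency, and the homogeneous counterexamples on $\dot x = 0$ (a root outside the unit disk, or a multiple root on the circle) for necessity. The only points left implicit, both standard, are the uniformity of the truncation error over the grid points of $[0,t_{\max}]$ and the passage from complex to real sequences in the necessity step, where one must pick a subsequence of indices $k$ along which the real part of $\zeta^k$ stays bounded away from zero.
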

Again, the proof of the theorem can be obtained from \citet{gautschi2011numerical}. Notice that this theorem is fundamental in numerical analysis. For example, it says that if a \emph{consistent} method is not zero-stable, then the global error may be arbitrary large when the step size goes to zero, \emph{even if the local error decreases}. In fact, if zero-stability is not satisfied, there exists a sequence generated by the linear multi-step method which grows with arbitrarily large factors.

\subsection{Region of absolute stability} \label{ssec:region_absolute_stab}
Stability and global error are ensured on finite time intervals, however solving optimization problems requires us to look at the infinite time horizon.
We thus need more refined results and start by finding conditions ensuring that the numerical solution does not diverge when the time interval increases,\ie~that the numerical solution is stable with a constant $C$ which \emph{does not depend of $t_{\max}$}. Formally, for a fixed step-size $h$, we want to ensure
\BEQ\label{eq:stability}
\|x_k\| \leq C \max_{i \in \{0,...,s-1\}} \|x_i\| \quad \mbox{for all }  k\in[0,t_{\max}/h]\,~\mbox{and } t_{\max}>0.
\EEQ
This is not possible without assumptions on the function $g$ as in the general case the solution $x(t)$ itself may diverge. We begin with the simple scalar linear case which, given $\lambda>0$, reads 
\begin{align}
\begin{split}
\dot x(t) &= -\lambda x(t)  \\
x(0) &= x_0.
\end{split}
\tag{Scalar Linear ODE} \label{eq:scalar_linear_ode}
\end{align}
The recurrence of a linear multi-step methods with parameters $(\rho,\sigma)$ applied to \eqref{eq:scalar_linear_ode} then reads
\[
\rho(E)x_k = -\lambda h\sigma(E)x_k \quad \Leftrightarrow \quad [\rho+\lambda h \sigma](E)x_k = 0,
\]
where we recognize an homogeneous recurrence equation. 
Condition \eqref{eq:stability} is then controlled by the step size $h$ and the constant $\lambda$, ensuring that this homogeneous recurrent equation produces bounded solutions. This leads us to the definition of the region of absolute stability.
\begin{mydef}
	The region of absolute stability of a linear multi-step method defined by $(\rho,\sigma)$ is the set of values $\lambda h$ such that the characteristic polynomial 
\BEQ\label{eq:caracpoly}
\pi_{\lambda h} \triangleq \rho + \lambda h \sigma
\EEQ	
of the homogeneous recurrent equation $\pi_{\lambda h}(E)x_k =0$ produces bounded solutions.
\end{mydef}
Standard linear algebra links this condition to the roots of the characteristic polynomial as recalled in the next proposition (see Lemma 12.1 of \citet{suli2003introduction}).
\begin{myprop} \label{prop:conv_recurence}
	 Let $\pi$ be a polynomial and write $x_k$ a solution of the homogeneous recurrent equation $\pi(E)x_k = 0$ with arbitrary initial values. If all roots of $\pi$ are inside the unit disk and the ones on the unit circle have a multiplicity exactly equal to one, then $\|x_k\| \leq \infty$.
\end{myprop}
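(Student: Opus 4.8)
The plan is to use the classical closed-form description of solutions to a scalar homogeneous linear recurrence in terms of the roots of its characteristic polynomial, and then read off boundedness term by term. Write $\deg \pi = s$ and let $r_1,\dots,r_m$ be the distinct roots of $\pi$ with respective multiplicities $\mu_1,\dots,\mu_m$, so that $\sum_{j=1}^m \mu_j = s$. First I would recall that the set of sequences satisfying $\pi(E)x_k = 0$ forms a vector space of dimension $s$, and that a basis is given by the sequences $k \mapsto k^p\, r_j^k$ for $1 \le j \le m$ and $0 \le p \le \mu_j - 1$. Hence any solution with arbitrary initial values $x_0,\dots,x_{s-1}$ is a fixed finite linear combination $x_k = \sum_{j,p} c_{j,p}\, k^p r_j^k$, the coefficients being determined uniquely by the initial values through an invertible Vandermonde-type system.

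Once this representation is in hand, boundedness reduces to estimating each basis term $k^p r_j^k$ as $k\to\infty$, and the hypotheses enter directly. If $|r_j| < 1$, then $|k^p r_j^k| = k^p |r_j|^k \to 0$ since exponential decay dominates any polynomial factor, so the term is bounded. If $|r_j| = 1$, the hypothesis forces $\mu_j = 1$, hence $p = 0$, so $|k^p r_j^k| = |r_j|^k = 1$ and the term is again bounded. Since roots of modulus strictly greater than one are excluded by assumption, every basis term is bounded, and therefore the finite linear combination satisfies $\sup_k \|x_k\| \le \sum_{j,p} |c_{j,p}|\, \sup_k |k^p r_j^k| < \infty$, which is the claim.

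The main obstacle, and the only place requiring care, is the behaviour on the unit circle: one must argue that a root of modulus one with multiplicity at least two would produce a genuinely unbounded basis solution (the factor $k^p$ with $p \ge 1$ grows without bound while $|r_j^k| = 1$), so that simplicity of the unit-circle roots is exactly what is needed. An equivalent and perhaps cleaner route I could take is to rewrite the recurrence in vector form $X_{k+1} = A X_k$ with $X_k = (x_k,\dots,x_{k+s-1})^\top$ and $A$ the companion matrix of $\pi$, whose characteristic polynomial is $\pi$ itself. Then $x_k$ is a coordinate of $A^k X_0$, and boundedness of $A^k$ is governed by the Jordan structure of $A$: eigenvalues inside the unit disk contribute decaying blocks, while an eigenvalue on the unit circle contributes a bounded block only if it is semisimple. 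The crucial fact is that a companion matrix is nonderogatory, so each eigenvalue lies in a single Jordan block whose size equals its algebraic multiplicity; semisimplicity of the unit-circle eigenvalues is thus equivalent to those roots of $\pi$ being simple, recovering the stated condition. Either way the argument is short, and its entire content is the dichotomy between decaying and polynomially amplified modes.
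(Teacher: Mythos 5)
Your proof is correct. Note that the paper does not actually prove this proposition: it cites Lemma 12.1 of \citet{suli2003introduction}, and your argument --- expanding a general solution in the basis $k \mapsto k^p r_j^k$ and bounding term by term, or equivalently passing to the companion matrix and using its nonderogatory Jordan structure --- is precisely the standard proof underlying that citation. Two small points deserve care if you wrote this out in full. First, the sequences $k^p r_j^k$ form a basis only when every $r_j \neq 0$; a zero root of multiplicity $\mu_j$ instead contributes the finitely supported sequences $\delta_{k,0},\dots,\delta_{k,\mu_j-1}$, which are trivially bounded, so the conclusion is unaffected (your companion-matrix route avoids this caveat altogether). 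Second, the proposition writes $\|x_k\|$, allowing vector-valued sequences; since $\pi(E)$ acts coordinatewise, it suffices to apply your scalar argument to each coordinate. Neither point changes the substance: the dichotomy you identify --- exponential decay dominating polynomial growth strictly inside the disk, versus simplicity forcing $p=0$ on the circle --- is exactly the content of the cited lemma, and your observation that a multiple root on the unit circle would produce a genuinely unbounded mode shows the simplicity hypothesis is sharp.
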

Absolute stability of a linear multi-step method determines its ability to integrate a linear ODE defined by 
\begin{align}
\begin{split}
\dot x(t) &= -A x(t)  \\
x(0) &= x_0,
\end{split}
\tag{Linear ODE} \label{eq:linear_ode}
\end{align}
where $A$ is a positive definite matrix whose eigenvalues belong to $[\mu,L]$ for $0<\mu \leq L$. In this case the step size $h$ must indeed be chosen such that for any $\lambda \in [\mu,L]$, $\lambda h$ belongs to the region of absolute stability of the method.
This \eqref{eq:linear_ode} is a special instance of \eqref{eq:gradflow} where~$f$ is a quadratic function. Therefore absolute stability gives necessary (but not sufficient) condition to integrate \eqref{eq:gradflow} of $L$-smooth $\mu$-strongly convex functions. 

\subsection{Convergence analysis in the linear case}
By construction, absolute stability also gives us hints on the convergence of $x_k$ to the equilibrium. More precisiely, it allows us to control the rate of convergence of $x_k$, approximating the solution $x(t)$ of \eqref{eq:linear_ode} (see Lemma 12.1 of \citet{suli2003introduction}).
\begin{myprop}\label{prop:conv_absolute_stab}
	Given a \eqref{eq:linear_ode} defined by $x_0$ and  a positive definite matrix $A$ whose eigenvalues belong to $[\mu,L]$ for $0<\mu \leq L$, for a fixed step size $h$ and a linear multi-step method defined by $(\rho,\sigma)$, let $r_{\max}$ be defined as
	\[
	r_{\max} = \max_{\lambda \in [\mu,L]} \; \max_{r \in \rm{roots}(\pi_{\lambda h}(z))} |r|, 
	\] 
where $\pi_{\lambda h} $ is defined in \eqref{eq:caracpoly}.
	If $r_{\max} <1 $ and its multiplicity is equal to $m$, then the speed of convergence of the sequence $x_k$ produced by the linear multi-step method to the equilibrium $x^*$ of the differential equation is given by
	\BEQ
		\|x_k-x^*\| = O(k^{m-1}r_{\max}^k).
	\EEQ
\end{myprop}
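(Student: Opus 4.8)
The plan is to diagonalize the problem and reduce it to scalar linear recurrences, one per eigenvalue of $A$. Since $A$ is symmetric positive definite (it is the Hessian of the quadratic $f$), the unique equilibrium of \eqref{eq:linear_ode} is $x^* = 0$, and we may write $A = Q\Lambda Q^{\top}$ with $Q$ orthogonal and $\Lambda = \mathrm{diag}(\lambda_1,\ldots,\lambda_d)$, $\lambda_i \in [\mu, L]$. Setting $z_k = Q^{\top} x_k$, the recurrence $\rho(E) x_k = -h\,\sigma(E) A x_k$ decouples coordinate-wise into $d$ independent scalar homogeneous recurrences $\pi_{\lambda_i h}(E)\,(z_k)_i = 0$, where $\pi_{\lambda_i h} = \rho + \lambda_i h\,\sigma$ is exactly the characteristic polynomial from \eqref{eq:caracpoly}. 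Because $Q$ is orthogonal, $\|x_k - x^*\| = \|x_k\| = \|z_k\|$, so it suffices to bound each coordinate $(z_k)_i$.

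First I would recall the explicit form of the solution of a scalar homogeneous recurrence (via the companion matrix and its Jordan form, or Lemma 12.1 of \citet{suli2003introduction}, which also underlies Proposition~\ref{prop:conv_recurence}). If the distinct roots of $\pi_{\lambda_i h}$ are $r_{i,1},\ldots,r_{i,p_i}$ with multiplicities $m_{i,1},\ldots,m_{i,p_i}$, then every solution has the form
\[
(z_k)_i = \sum_{j=1}^{p_i} q_{i,j}(k)\, r_{i,j}^{\,k},
\]
where $q_{i,j}$ is a polynomial of degree at most $m_{i,j}-1$ whose coefficients are determined by the $s$ starting values. Hence $|(z_k)_i| = O\big(k^{m_i^\star - 1} |r_i^\star|^{\,k}\big)$, where $r_i^\star$ is a root of $\pi_{\lambda_i h}$ of largest modulus and $m_i^\star$ its multiplicity.

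It then remains to take the worst case over the finitely many eigenvalues. By definition $r_{\max} = \max_i |r_i^\star|$, attained at some eigenvalue with corresponding multiplicity $m$. For any root $r$ with $|r| < r_{\max}$ and any fixed polynomial degree $j$, we have $k^{j}|r|^{\,k} = o(k^{m-1} r_{\max}^{\,k})$, since the exponential gap dominates any polynomial factor; the contributions at modulus exactly $r_{\max}$ grow at most like $k^{m-1} r_{\max}^{\,k}$. Summing the $d$ coordinates (a fixed number, independent of $k$) and using $\|z_k\|^2 = \sum_i |(z_k)_i|^2$ preserves this asymptotic order, yielding $\|x_k - x^*\| = O(k^{m-1} r_{\max}^{\,k})$.

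The main obstacle I expect is purely the bookkeeping around multiplicities: one must verify that the polynomial factor $k^{m-1}$ in the final bound is governed by the multiplicity of a \emph{maximal-modulus} root, and not by some root of smaller modulus but higher multiplicity, and that $m$ in the statement is read as the multiplicity of a root attaining $r_{\max}$. The hypothesis $r_{\max} < 1$ is exactly what forces the exponential term to decay and lets these $o(\cdot)$ comparisons absorb all polynomial factors; without it the argument would fail.
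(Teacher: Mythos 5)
Your proposal is correct, and it is essentially the argument the paper intends: the paper gives no proof of its own for this proposition, deferring entirely to Lemma 12.1 of \citet{suli2003introduction}, and your diagonalization of $A$ followed by the standard structure theorem for scalar homogeneous linear recurrences (polynomial-times-geometric terms, worst case over the finitely many eigenvalues) is precisely the argument behind that citation. One minor note: the hypothesis $r_{\max}<1$ is needed only so that the bound actually decays (i.e., so the sequence converges to $x^*$); the $o(\cdot)$ comparisons between roots of different moduli that you invoke hold regardless of whether $r_{\max}$ is below one.
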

We can now use these properties to analyze and design multi-step methods.

\section{Analysis and design of multi-step methods}
\label{sec:some_multistep}
As shown before, we want to integrate \eqref{eq:gradflow} and Proposition \ref{prop:rate_cont_strconv} gives us a rate of convergence in the continuous case. If the method tracks $x(t)$ with sufficient accuracy, then the rate of the method will be close to the rate of convergence of $x(kh)$. So, \textit{larger values of $h$ yield faster convergence of $x(t)$ to the equilibrium $x^*$}. However $h$ cannot be too large, as the method may be too inaccurate and/or unstable as $h$ increases. {\em Convergence rates of optimization algorithms are thus controlled by our ability to discretize the gradient flow equation using large step sizes.} We recall the different conditions that proper linear multi-step methods should follow.

\begin{itemize}
	\item \emph{Monic polynomial (Section \ref{ssec:linear_multistep}).} This is a convention, otherwise dividing both sides of the difference equation of the multi-step method by $\rho_s$ does not change the method.
	\item \emph{Explicit method (Section \ref{ssec:linear_multistep}).} We assume that the scheme is explicit in order to avoid solving a non-linear system at each step (Section \ref{sec:prox} shows that implicit methods are linked to proximal methods).
	
	\item \emph{Consistency (Section \ref{ssec:consistency}).} If the method is not consistent, then the local error does not converge when the step size goes to zero.
	\item \emph{Zero-stability (Section \ref{ssec:zero_stability}).} Zero-stability ensures convergence of the global error (Section \ref{ssec:global_error}) when the method is also consistent.
	\item \emph{Region of absolute stability (Section \ref{ssec:region_absolute_stab}).} If $\lambda h$ is not inside the region of absolute stability for any $\lambda \in [\mu,L]$, then the method is divergent when $t_{\max}$ increases.
\end{itemize}
Using the remaining degrees of freedom, we will tune the algorithm to have the best rate of convergence on \eqref{eq:linear_ode}, which corresponds to the optimization of a quadratic function. Indeed, as showed in Proposition \ref{prop:conv_absolute_stab}, the largest root of $\pi_{\lambda h}$ gives us the rate of convergence on quadratic functions (when $\lambda \in [\mu,L]$). Since smooth and strongly convex functions are close to be quadratic (they are in fact sandwiched between two quadratics), this will also give us a good idea of the rate of convergence on these functions.

We do not derive a proof of convergence of the sequence for a general smooth and (strongly) convex function (in fact, it is already proved by \cite{nesterov2013introductory} or using Lyapunov techniques by \citet{wilson2016lyapunov}). But our results provide intuition on \emph{why} accelerated methods converge faster.

\subsection{Analysis and design of explicit Euler's method ($s=1$)}

In Section \ref{ssec:euler_method} we introduced Euler's method. In fact, we can view it as an explicit linear ``multi-step'' method with $s=1$ defined by the polynomials
\[
\rho(z) = -1 + z, \quad \sigma(z) = 1.
\]
We can check easily that it is consistent (using Proposition \ref{prop:consistency}) and zero-stable since $\rho$ has only one root which lies on the unit circle (Theorems \ref{theo:root_condition} and \ref{theo:equivalence_theo}). We need to determine the region of absolute stability in order to have an idea about the maximum value that $h>0$ can take before the method becomes unstable. Assume we want to integrate any $\mu$-strongly convex and $L$-smooth function $f$, with $0< \mu < L$ with any starting value $x_0$. Then, we need to find the set of values of $h$ such that the roots of the polynomial 
\[
\pi_{\lambda h}(z) = [\rho+\lambda h \sigma](z) = -1+\lambda h + z, \quad \lambda \in [\mu,L]
\]
are small. The unique root is $1-\lambda h$ and we need to solve the following minimax problem
\[
\min_{h}\max_{\lambda \in [\mu,L]}\left|1-\lambda h\right|,
\]
in the variable $h>0$.
The solution of this optimization problem is $h^* = \frac{2}{L+\mu}$, its optimal value is $(L-\mu)/(L+\mu)$ and its rate of convergence is then
\[
\|x_k-x^*\| = O\bigg(\Big(\frac{1-\mu/L}{1+\mu/L}\Big)^k\bigg).
\]
We recover the optimal step size and the rate of convergence of the gradient method for a general smooth and strongly convex function \citep{nesterov2013introductory}.

\subsection{Analysis of two-step methods ($s=2$)}
We will now analyze two-step methods. First we write the conditions of a good linear multi step method, introduced at the beginning of this section, into constraints on the coefficients.
\begin{align*}
\rho_2 &= 1 &\text{(Monic polynomial)}\\
\sigma_2 &= 0 &\text{(Explicit method)}\\
\rho_0+\rho_1+\rho_2 &= 0 &\text{(Consistency)}\\
\sigma_0+\sigma_1+\sigma_2  &= \rho_1 + 2\rho_2 &\text{(Consistency)}\\
|\text{Roots}(\rho)| &\leq 1 &\text{(Zero-stability).}
\end{align*}
If we use all equalities, we finally have three linear constraints, defined by
\begin{equation}
\mathcal{L} = \Big\{\rho_0,\,\rho_1,\,\sigma_1 : \rho_1  =   -(1+\rho_0); \quad \sigma_1 =  1-\rho_0-\sigma_0;\quad |\rho_0|  <  1\Big\}.
\label{eq:astab_set_linear}
\end{equation}
We will now try to find some condition on the remaining parameters in order to have a stable method. At first, let us analyze a condition on the roots of second order equations. Absolute stability requires that all roots of the polynomial $\pi_{\lambda h}$ are inside the unit circle.
The following proposition gives us the values of the roots of $\pi_{\lambda h}$ as a function of the parameters $\rho_i$ and $\sigma_i$.

\begin{restatable}{myprop}{absolute_stab_pilambdah}
	\label{prop:absolute_stab_pilambdah}
	Given constants $0 < \mu \leq L$, a step size $h>0$ and a linear two-step method defined by $(\rho,\sigma)$, under the conditions
	\BEAS
		(\rho_1+\mu h\sigma_1)^2 & \leq & 4(\rho_0+\mu h \sigma_0),\\
		(\rho_1+L h \sigma_1)^2 & \leq & 4(\rho_0+ L h \sigma_0),
	\EEAS
	the roots $r_{\pm}(\lambda)$ of $\pi_{\lambda h}$, defined in \eqref{eq:caracpoly}, are complex 
	for any $\lambda\in[\mu,L]$. Moreover, the largest modulus root is equal to
	\BEQ
		\max_{\lambda \in [\mu,L]} |r_{\pm}(\lambda)|^2  = \max\left\{ \rho_0 + \mu h \sigma_0,~ \rho_0 + L h \sigma_0 \right\} \label{eq:largest_modulus_root}.
	\EEQ
\end{restatable}
\begin{proof}
	We begin by analyzing the roots $r_\pm$ of the generic polynomial
	\[
	z^2+bz+c,
	\]
	where $b$ and $c$ are real numbers, corresponding to the coefficients of $\pi_{\lambda h}$, i.e. $b = \rho_1+\lambda h \sigma_1$ and $c = \rho_0+\lambda h \sigma_0$.  For a fixed $\lambda$ roots are complex if and only if 
	\[
	b^2 \leq 4c \quad \Leftrightarrow \quad (\rho_1+\lambda h\sigma_1)^2 - 4(\rho_0+\lambda h \sigma_0) \leq 0.
	\]
	Since the left side this is a convex function in $\lambda$, it is equivalent to check only for the extreme values
	\BEAS
	(\rho_1+\mu h\sigma_1)^2 & \leq & 4(\rho_0+\mu h \sigma_0),\\
	(\rho_1+L h \sigma_1)^2 & \leq & 4(\rho_0+ L h \sigma_0).
	\EEAS
	As roots are complex conjugates,
	\[
	|r_{\pm}(\lambda)|^2 = |c| = |\rho_0 + \lambda h \sigma_0|.
	\]
	Because the function is convex, the maximum is attained for an extreme value of $\lambda$,
	\BEAS
	\max_{\lambda \in [\mu,L]} |r_{\pm}(\lambda)|^2  = \max\left\{ \rho_0 + \mu h \sigma_0 ,~ \rho_0 + L h \sigma_0 \right\},
	\EEAS
	which is the desired result.
\end{proof}
The next step is to minimize the largest modulus defined in  \eqref{eq:largest_modulus_root} in the coefficients $\rho_i$ and $\sigma_i$ to get the best rate of convergence, assuming the roots are complex. We will not develop the case where the roots are real because this leads to weaker results.

\subsection{Design of optimal two-step method for quadratics}
We have now have all ingredients to build a two-step method for which the sequence $x_k$ converges quickly to $x^*$ for quadratic functions. We need to solve the following problem,
\[\BA{ll}
\mbox{minimize} &\max\left\{\rho_0 + \mu h \sigma_0,~\rho_0 + L h \sigma_0\right\} \\
\text{s.t.} & (\rho_0,\rho_1,\sigma_1) \in \mathcal{L} \\
& (\rho_1+\mu h\sigma_1)^2 \leq 4(\rho_0+\mu h \sigma_0) \\
& (\rho_1+L h \sigma_1)^2 \leq 4(\rho_0+ L h \sigma_0),
\EA\]
in the variables $\rho_0,\rho_1,\sigma_0,\sigma_1,h>0$,
where $\mathcal{L}$ is defined in \eqref{eq:astab_set_linear}. If we use the equality constraints in~\eqref{eq:astab_set_linear} and make the following change of variables,
\begin{equation}
\begin{cases}
\hat{h} & = h(1-\rho_0), \\
c_\mu & = \rho_0+ \mu h\sigma_0, \\
c_L & = \rho_0+ L h\sigma_0,
\end{cases}
\label{eq:astab_change_var}
\end{equation}
the problem becomes, for fixed $\hat h$,
\[\BA{ll}
\mbox{minimize}  & \max\left\{c_\mu,~c_L \right\} \\
\text{s.t.} & (-1-c_\mu +\mu \hat{h})^2 \leq 4 c_\mu \\
& (-1-c_L+L \hat{h} )^2 \leq 4 c_L \\
& |Lc_\mu - \mu c_L| < |L-\mu|,
\EA\]
in the variables $c_\mu, c_L$. In that case, the optimal solution is given  by
\BEQ \label{eq:optimal_roots}
	c_\mu^* = \Big(1-\sqrt{\mu \hat h}\Big)^2, \quad c_L^* = \Big(1-\sqrt{L \hat h}\Big)^2,
\EEQ
obtained by tightening the two first inequalities, for $\hat h \in ]0,\frac{(1+\mu/L)^2}{L}[$ such that last inequality is satisfied. Now if we fix $\hat{h}$ we can recover an optimal two step linear method defined by $(\rho,\sigma)$ and an optimal step size $h$ by using the equations in~\eqref{eq:astab_change_var}. We will use the following quantity 
\BEQ
\label{eq:beta}
\beta \triangleq \frac{1-\sqrt{\mu/L}}{1+\sqrt{\mu/L}}.
\EEQ

\paragraph{A suboptimal two-step method.}

We can fix $\hat h = 1/L$ for example. All computations done, the parameters of this two-step method, called method $\mathcal{M}_1$, are
\BEQ
\mathcal{M}_1 = 
\begin{cases}
	\rho(z) &= \beta -(1+\beta)z + z^2, \\
	\sigma(z) &= -\beta(1-\beta) + (1-\beta^2) z, \\
	h &= \frac{1}{L(1-\beta)},
\end{cases}
\label{eq:method_m1}
\EEQ
and its largest modulus root \eqref{eq:largest_modulus_root} is given by
\[
\text{rate}(\mathcal{M}_1) = \sqrt{\max\{ c_\mu ,~ c_L \}} = \sqrt{c_{\mu}} = 1-\sqrt{\mu/L}.
\]

\paragraph{Optimal two-step method for quadratics.} 

We can compute the optimal $\hat h$ which minimizes the maximum of the two roots $c^*_\mu$ and $c^*_L$ defined in~\eqref{eq:optimal_roots}. 
The solution is simply the one which balances the two terms in the maximum:
\[
\hat h^* = \frac{(1+\beta)^2}{L} \quad \Rightarrow \quad c_{\mu}^* = c_L^*.
\]
This choice of $\hat h$ leads to the method $\mathcal{M}_2$, described by
\BEQ
\mathcal{M}_2 = 
\begin{cases}
	\rho(z) &= \beta^2  -(1+\beta^2)z + z^2, \\
	\sigma(z) &= (1-\beta^2) z, \\
	h &= \frac{1}{\sqrt{\mu L}},
\end{cases}
\label{eq:method_m2}
\EEQ
with the rate of convergence
\[
\text{rate}(\mathcal{M}_2) = \sqrt{c_\mu} = \sqrt{c_L} = \beta < \text{rate}(\mathcal{M}_1).
\]
We will now see that methods $\mathcal{M}_1$ and $\mathcal{M}_2$ are actually related to Nesterov's method and Polyak's heavy ball algorithms.

\section{On the link between integration and optimization}
\label{sec:links}
In the previous section, we derived a family of linear multi-step methods, parametrized by $\hat h$. We will now compare these methods to common optimization algorithms used to minimize $L$-smooth, $\mu$-strongly convex functions.

\subsection{Polyak's heavy ball method}
The heavy ball method was proposed by \citet{polyak1964some}. It adds a momentum term to the gradient step
\[
x_{k+2} = x_{k+1} - c_1 \nabla f(x_{k+1}) + c_2 (x_{k+1}-x_k),
\]
where $c_1 = (1-\beta^2)/\sqrt{\mu L}$ and $c_2 = \beta^2 $
where $\beta$ is defined in (\ref{eq:beta}). We can organize the terms in the sequence to match the general structure of linear multi-step methods, to get
\[
\beta^2 x_k - (1+\beta^2) x_{k+1}  + x_{k+2} = (1-\beta^2)/\sqrt{\mu L} \left(-\nabla f(x_{k+1})\right).
\]
We easily identify $\rho(z) = \beta^2 -(1+\beta^2)z + z^2$ and $h\sigma(z) = (1-\beta^2)/\sqrt{\mu L} z$. To extract $h$, we will assume that the method is consistent (see conditions \eqref{eq:consistency}), which means
\BEAS
\rho(1) &=& 0  \qquad \qquad \text{Always satisfied}  \\ 
h\rho'(1) &=& h\sigma(1)  \qquad \Rightarrow \; h = \frac{1}{\sqrt{\mu L}}.
\EEAS
All computations done, we can identify the ``hidden'' linear multi-step method as
\BEQ
\mathcal{M}_{\text{Polyak}} = 
\begin{cases}
	\rho(z) & = \beta^2 -(1+\beta^2)z + 1\\
	\sigma(z) & = (1-\beta^2)z\\
	h & = \frac{1}{\sqrt{\mu L}}.
\end{cases}
\EEQ
This shows that $\mathcal{M}_{\text{Polyak}} = \mathcal{M}_2$. In fact, this result was expected since Polyak's method is known to be optimal for quadratic functions. However, it is also known that Polyak's algorithm does not converge for a general smooth and strongly convex function \citep{lessard2016analysis}.

\subsection{Nesterov's accelerated gradient}
Nesterov's accelerated method in its simplest form is described by two sequences $x_k$ and $y_k$, with
\BEAS
y_{k+1} & = & x_{k} - \frac{1}{L} \nabla f(x_k), \\
x_{k+1} & = & y_{k+1} + \beta(y_{k+1}-y_k).
\EEAS
As above, we will write Nesterov's accelerated gradient as a linear multi-step method by expanding $y_k$ in the definition of $x_k$, to get
\[
\beta x_k - (1+\beta) x_{k+1} + x_{k+2} = \frac{1}{L} \left( -\beta (-\nabla f(x_k)) + (1+\beta) (-\nabla f(x_{k+1})) \right).
\]
Consistency of the method is then ensured by 
\BEAS
\rho(1) &=& 0  \qquad \qquad \text{\text{Always satisfied}}\\
h\rho'(1) &=& h\sigma(1)  \qquad \Rightarrow \; h = \frac{1}{L(1-\beta)}.
\EEAS
After identification,
\BEAS
\mathcal{M}_{\text{Nest}} = 
\begin{cases}
	\rho(z) &= \beta -(1+\beta)z + z^2, \\
	\sigma(z) &= -\beta(1-\beta) + (1-\beta^2) z, \\
	h &= \frac{1}{L(1-\beta)},
\end{cases}
\EEAS
which means that $\mathcal{M}_1 = \mathcal{M}_{\text{Nest}}$. 

\subsection{Nesterov's method interpretation as a faster stable integration method}

\begin{figure}[t!]
	\centering
	\includegraphics[width=0.48\textwidth]{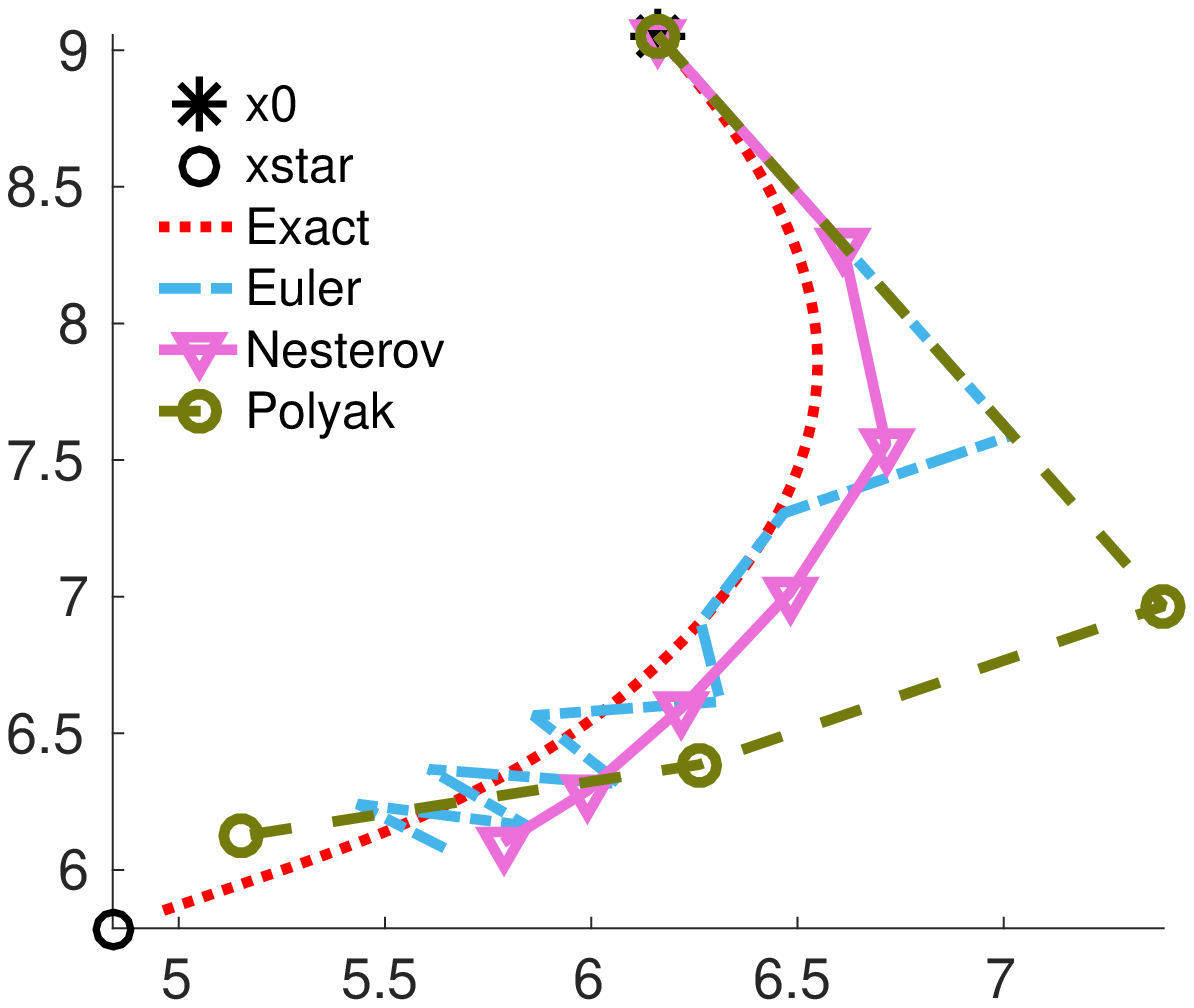}
	\includegraphics[width=0.48\textwidth]{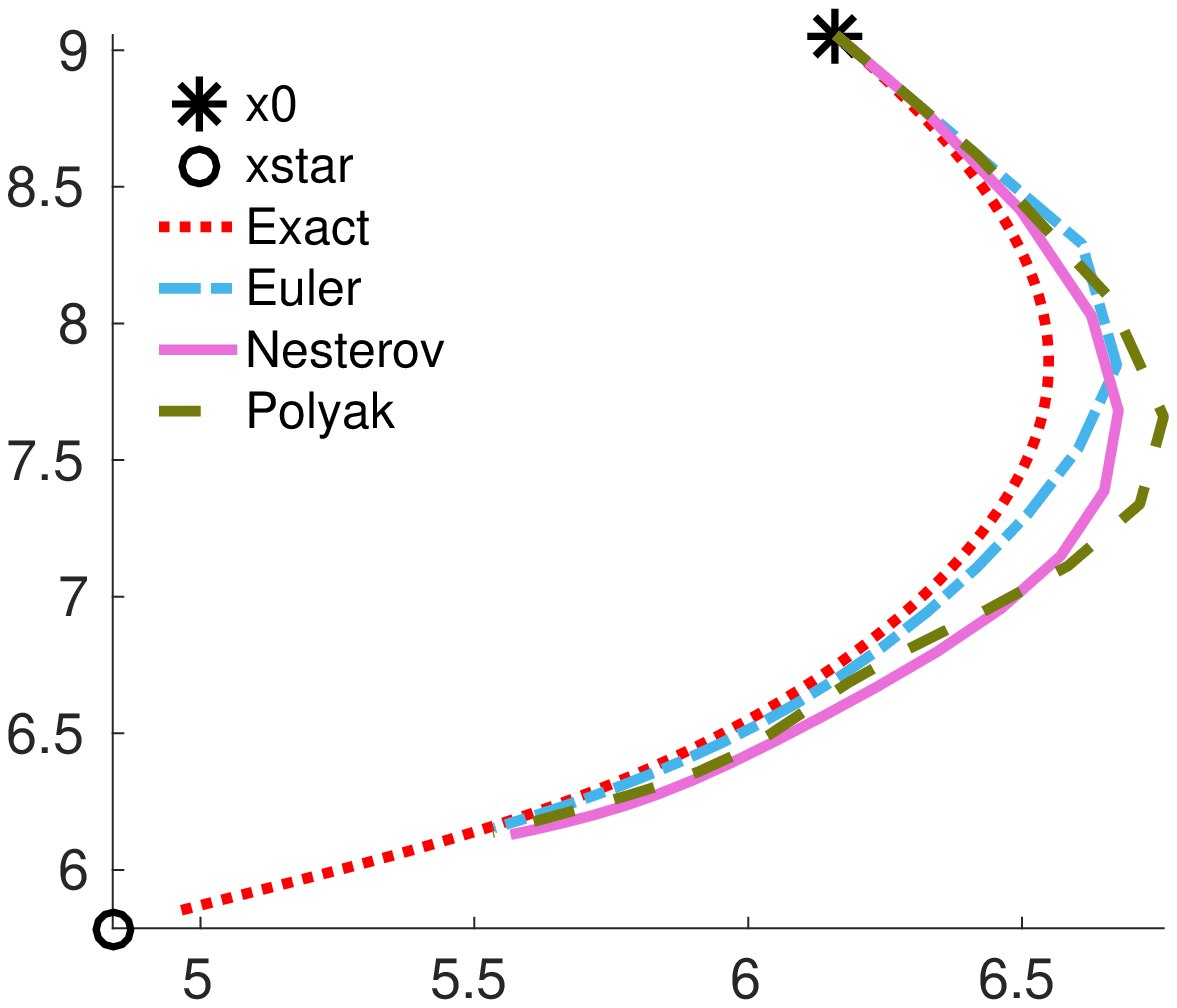}
	\caption{Integration of a \eqref{eq:linear_ode} (which corresponds to the minimization of a quadratic function) using Euler's, Nesterov's and Polyak's methods between $[0,t_{\max}]$. On the left, the optimal step size is used. Because Polyak's algorithm is the one with the biggest step size, it needs less iterations than Nesterov or Euler to approximate $x(t_{\max})$. On the right side, we reduced the step size to $1/L$ for all methods. We clearly observe that they all track the same ODE: the gradient flow.}
	\label{fig:integration}
\end{figure}

Pushing the analysis a little bit further, we can show \emph{why} Nesterov's algorithm is faster than gradient method. There is of course a complete proof of its rate of convergence \citep{nesterov2013introductory}, even using the argument of differential equations \citep{wibisono2016variational,wilson2016lyapunov}, but we take a more intuitive approach here. The key parameter is the step size $h$. If we compare it with the one in classical gradient method, Nesterov's method uses a step size which is $(1-\beta)^{-1} \approx \sqrt{L/\mu}$ larger.

Recall that, in continuous time, we have seen the rate of convergence of $x(t)$ to $x^*$ given by
\[
f(x(t))-f(x^*) \leq e^{-2\mu t} (f(x_0)-f(x^*)). 
\]
The gradient method tries to approximate $x(t)$ using Euler approximation with step size $h = 1/L$, which means $x_k^{(\text{grad})} \approx x(k/L)$, so
\[
f(x_k^{(\text{grad})}) - f(x^*) \approx f(x(k/L))-f(x^*) \leq (f(x_0)-f(x^*))e^{-2k\frac{\mu}{L}}.
\]
However, Nesterov's method has the step size
\[
h_{\text{Nest}} = \frac{1}{L(1-\beta)} = \frac{1+\sqrt{\mu/L}}{2\sqrt{\mu L}} \approx \frac{1}{\sqrt{4\mu L}} \qquad \text{which means} \quad x_{k}^{\text{nest}} \approx x\left(k/\sqrt{4\mu L}\right).
\]
In that case, the estimated rate of convergence becomes
\[
f(x_{k}^{\text{nest}})-f(x^*) \approx f(x(k/\sqrt{4\mu L})) - f(x^*) \leq (f(x_0)-f(x^*))e^{-k\sqrt{\mu/L}},
\]
which is approximatively the rate of convergence of Nesterov's algorithm in discrete time and we recover the accelerated rate in $\sqrt{\mu/L}$ versus $\mu/L$ for gradient descent. The accelerated method is more efficient because it integrates the gradient flow \emph{faster} than simple gradient descent, making longer steps. A numerical simulation in Figure \ref{fig:integration} makes this argument more visual. This intuitive argument is still valid for the convex counterpart of Nesterov's accelerated gradient.

\section{Acceleration for convex functions}
\label{sec:convex}
By matching the coefficients of Nesterov's method, we deduced the value of the step-size used for the integration of \eqref{eq:gradflow}. Then, using the rate of convergence of $x(t)$ to $x^*$, we estimated the rate of convergence of Nesterov's method assuming $x_{k}\approx x(t_k)$. Here, we will do the same but without assuming strong convexity. However, the estimation of the rate of convergence in discrete time needs the one in continuous time, described by the following proposition.
\begin{restatable}{myprop}{rate_convex}\label{prop:rate_convex}
	Let $f$ be $L$-smooth convex function, $x^*$ one of its minimizers and $x(t)$ be the solution of \eqref{eq:gradflow}. Then
	\BEQ \label{eq:rate_conv_convex}
		f(x(t))-f(x^*) \leq \frac{\|x_0-x^*\|^2}{t+(2/L)}.
	\EEQ
\end{restatable}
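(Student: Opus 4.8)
The plan is to reduce everything to a scalar differential inequality for the energy $g(t) \triangleq f(x(t)) - f(x^*)$ and then integrate it. Write $R = \|x_0 - x^*\|$. The first step is the same computation already used in the strongly convex case: along the flow $\frac{d}{dt} g(t) = \langle \nabla f(x(t)), \dot x(t)\rangle = -\|\nabla f(x(t))\|^2 \le 0$, so $g$ is nonincreasing. I would also record that the distance to the minimizer is nonincreasing, since $\frac{d}{dt}\tfrac12\|x(t)-x^*\|^2 = -\langle \nabla f(x(t)), x(t)-x^*\rangle$ and convexity gives $\langle \nabla f(x(t)), x(t)-x^*\rangle \ge g(t) \ge 0$; hence $\|x(t)-x^*\| \le \|x_0-x^*\| = R$ for all $t \ge 0$.

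The key step is to convert the gradient norm appearing in $\dot g$ into a function of $g$ alone. Using convexity together with Cauchy--Schwarz, $g(t) \le \langle \nabla f(x(t)), x(t)-x^*\rangle \le \|\nabla f(x(t))\|\,\|x(t)-x^*\| \le R\,\|\nabla f(x(t))\|$, hence $\|\nabla f(x(t))\| \ge g(t)/R$. Substituting into the identity for $\dot g$ yields the Riccati-type inequality $\frac{d}{dt} g(t) \le -g(t)^2/R^2$.

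It then remains to integrate this. On any interval where $g(t) > 0$, dividing by $-g(t)^2$ gives $\frac{d}{dt}\big(1/g(t)\big) \ge 1/R^2$, so integrating from $0$ to $t$ produces $1/g(t) \ge 1/g(0) + t/R^2$. Here I would invoke $L$-smoothness only for the initial value: since $\nabla f(x^*) = 0$, the descent lemma gives $g(0) = f(x_0)-f(x^*) \le \tfrac{L}{2} R^2$, i.e. $1/g(0) \ge 2/(LR^2)$. Combining, $1/g(t) \ge (t + 2/L)/R^2$, which inverts to the claimed bound. The degenerate case where $g$ vanishes at some finite time is handled by noting that then $x(t)=x^*$ is an equilibrium and the estimate holds trivially.

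The main obstacle is the middle step: turning the exact identity $\dot g = -\|\nabla f\|^2$ into a self-contained inequality in $g$. This is precisely where both convexity (to get $g \le \langle \nabla f, x-x^*\rangle$) and the monotonicity of $\|x(t)-x^*\|$ must be used together. Note that smoothness enters \emph{only} through the initial condition, which is exactly what produces the sharp offset $2/L$ rather than the cruder $1/(2t)$ rate that a plain Lyapunov argument of the form $t\,g(t) + \tfrac12\|x(t)-x^*\|^2$ would give.
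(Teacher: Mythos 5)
Your proposal is correct and follows essentially the same route as the paper: the same energy $f(x(t))-f(x^*)$, the same Riccati-type inequality $\dot g \le -g^2/\|x_0-x^*\|^2$ obtained from convexity, Cauchy--Schwarz and the monotonicity of $\|x(t)-x^*\|$, and smoothness invoked only to bound the initial value via $g(0)\le \tfrac{L}{2}\|x_0-x^*\|^2$. Your integration step (integrating $\frac{\dif}{\dif t}(1/g)\ge 1/R^2$ directly from the initial condition, plus handling the case where $g$ vanishes in finite time) is in fact slightly tighter than the paper's argument, which determines an integration constant $C$ by a sufficiency condition, but it is the same proof in substance.
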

\begin{proof}
	Let $\mathcal{L}(x(t))=f(x(t))-f(x^*)$. We notice that $\nabla \mathcal{L}(x(t)) =\nabla f(x(t)) $ and $\dif \mathcal{L}(x(t)) / \dif t  = -\|\nabla f(x(t))\|^2 $. Since $f$ is convex, 
	\BEAS
	\mathcal{L}(x(t)) & \leq &\scalar{\nabla f(x(t))}{x(t)-x^*}\\
	& \leq & \|\nabla f(x(t))\|\|x(t)-x^*\|.
	\EEAS
	By consequence,
	\BEQ
	-\|\nabla f(x(t))\|^2 \leq -\frac{\mathcal{L}(x(t))^2}{\|x(t)-x^*\|^2} \leq -\frac{\mathcal{L}(x(t))^2}{\|x_0-x^*\|^2}.\label{eq:ratio_lyapunov_norm}
	\EEQ
	The last inequality comes from the fact that $\|x(t)-x^*\|$ decreases over time,
	\BEAS
	\frac{\dif}{\dif t}\|x(t)-x^*\|^2 &=& 2 \scalar{\dot x(t)}{x(t)-x^*}, \\
	& = & -2 \scalar{\nabla f(x(t))}{x(t)-x^*}, \\
	& \leq & 0 \quad \text{since $f$ is convex.}
	\EEAS	
	From \eqref{eq:ratio_lyapunov_norm}, we deduce the differential inequality
	\[
	\frac{\dif}{\dif t}  \mathcal{L}(x(t)) \leq -\frac{\mathcal{L}(x(t))^2}{\|x_0-x^*\|^2}.
	\]
	The solution is obtained by integration, 
	\[
	\int_0^t\frac{\dif \mathcal{L}(x(\tau)) / \dif \tau}{\mathcal{L}(x(\tau))^2} \dif \tau \leq \int_{0}^t \frac{-1}{\|x_0-x^*\|^2}.
	\]
	The general solution is thus
	\[ 
	\mathcal{L}(x(t)) \leq \frac{1}{\frac{t}{\|x_0-x^*\|^2} + C},
	\] 
	for some constant $C$. Since the inequality is valid for all time $t\geq 0$, the following condition on $C$,
	\[
	\mathcal{L}(x(t)) \leq \frac{1}{\frac{t}{\|x_0-x^*\|^2} + C} \leq \frac{1}{C} \quad \text{ for } \quad t\geq 0,
	\]
	is sufficient. Setting $C = \frac{1}{f(x_0)-f(x^*)}$ satisfies the above inequality. Using smoothness of $f$,
	\[
	f(x_0)-f(x^*)\leq \frac{L}{2}\|x_0-x^*\|^2,
	\]
	we get the desired result.
\end{proof}
Assume we use Euler's method with step size $h = \frac{1}{L}$, the estimated rate of convergence will be
\[
	f(x_{k}) - f(x^*) \approx f(x(kh))-f(x^*) \leq  \frac{L\|x_0-x^*\|^2}{k+2},
\]
which is close to the rate of convergence of the classical gradient method for convex function. Now, consider Nesterov's method for minimizing a smooth and convex function $f$:
\BEAS
	y_{k+1} & = & x_k-\frac{1}{L}\nabla f(x_k) \\
	x_{k+1} & = & -\beta_k x_k + (1+\beta_k) x_{k+1},
\EEAS
where $\beta_k \approx \frac{k-2}{k+1}$. If we expand everything, we get after rearrangement,
\[
	\beta_k x_{k-1} - (1+\beta_k) x_k + x_{k+1} = \frac{1}{L}\left( \beta_k (-\nabla f(x_{k-1})) - (1+\beta_k)(-\nabla f(x_k))  \right).
\]
In other terms, we have an expression of the form $\rho_k(E) x_k = h_k\sigma_k(E) (-\nabla f(x_k))$. We can identify $h$ if we assume the method consistent, which means
\BEAS
\rho(1) &=& 0  \qquad \qquad \text{Always satisfied}  \\
h_k\rho'_k(1) &=& h_k\sigma_k(1)  \qquad \Rightarrow \; h_k = \frac{1}{L(1-\beta_{k+1})} = \frac{(k+2)}{3L}. 
\EEAS
We can estimate, using \eqref{eq:rate_conv_convex}, the rate of convergence of Nesterov's method. Since $x_{k} \approx x(t_k)$,
\[
	x_k \approx x\big({\textstyle\sum_{i=0}^kh_i}\big) \approx x\big( {\textstyle\frac{k^2}{6L}} \big).
\]
In terms of convergence to the optimal value,
\[
	f(x_k)-f(x^*) \approx f(x(t_k)-f(x^*) \leq \frac{6L\|x_0-x^*\|^2}{k^2+12},
\]
which is close to the bound from \citet{nesterov2013introductory}. Again, because the step-size of Nesterov's algorithm is larger (while keeping a stable sequence), we converge faster than the Euler's method.

\section{Proximal algorithms and implicit integration methods}
\label{sec:prox}
We present here links between proximal algorithms and implicit numerical methods that integrate the gradient flow equation. We begin with Euler's implicit method that corresponds to the proximal point algorithm. 

\subsection{Euler's implicit method and proximal point algorithm}
We saw in Section \ref{ssec:euler_method} that Euler's explicit method used the Taylor expansion of the solution $x(t)$ of the \eqref{eq:ode} at the current point. The implicit version uses the Taylor expansion at the next point which reads
\[
x(t) = x(t+h) -h\dot{x}(t+h) +O(h^2).
\]
If $t = kh$, by neglecting the second order term we get implicit Euler's method,
\BEQ\label{eq:implicit}
x_{k+1} = x_k + hg(x_{k+1}).
\EEQ
This recurrent equation requires to solve an implicit equation at each step that may be costly. However it provides better stability than the explicit version. This is generally the case for implicit methods (see \citet{suli2003introduction} for further details on implicit methods). 

Now assume that $g$ comes from a potential $-f$ such that we are integrating \eqref{eq:gradflow}. Solving the implicit equation \eqref{eq:implicit} is equivalent to compute the proximal operator of $f$ defined as
\BEQ\label{eq:prox}
	\prox_{f,h}(x) = \argmin_z \frac{1}{2}\|z-x\|^2_2 + hf(z).
\EEQ
This can be easily verified by checking the first-order optimality conditions of the minimization problem. 
Euler's implicit method applied to \eqref{eq:gradflow} reads then 
\[
x_{k+1} = \prox_{f,h}(x_k),
\]
where we recognize the proximal point algorithm \citep{rockafellar1976monotone}.

We present now Mixed ODE that corresponds to composite optimization problems. 

\subsection{Implicit Explicit methods and proximal gradient descent}
In numerical analysis, it is common to consider the differential equation
\BEQ
\dot x = g(x) + \omega(x), \tag{Mixed ODE} \label{eq:mixed_ode}
\EEQ
where $g(x)$ is considered as the ``non-stiff'' part of the problem and $\omega$ the stiff one, where stiffness may be assimilated to bad conditioning \citep{ascher1995implicit,frank1997stability}.
Usually, we assume $\omega$ integrable using an implicit method. If $\omega$ derives from a potential $-\Omega$ (meaning $\omega = -\nabla \Omega$), this is equivalent to assume that the proximal operator of $\Omega$ defined in \eqref{eq:prox} can be computed exactly.

We approximate the solution of \eqref{eq:mixed_ode} using IMplicit-EXplicit schemes (IMEX). In our case, we will focus on the following multi-step based IMEX scheme,
\[
\rho(E)x_k = h\big(\sigma(E)g(x_k) + \gamma(E)\omega(x_k)\big),
\]
where $\rho, \sigma$ and $\gamma$ are polynomials of degrees $s$, $s-1$ (the explicit part) and $s$ respectively and $\rho$ is monic. It means that, at each iteration, we need to solve, in $x_{k+s}$,
\[
x_{k+s} = \sum_{i=0}^{s-1} \underbrace{\left(-\rho_i x_{k+i} + \sigma_i hg(x_{k+i}) + \gamma_ih\omega(x_{k+i}) \right)}_{\text{known}} + \gamma_s \omega(x_{k+s}).
\]

In terms of optimization the mixed ODE corresponds to composite minimization problems of the form 
\BEQ\label{eq:composite}
\mbox{minimize}\quad  f(x) + \Omega(x),
\EEQ
where $f, \Omega$ are convex and $\Omega$ has a computable proximal operator. We can link IMEX schemes with many optimization algorithms which use the proximal operator, such as proximal gradient method, FISTA or Nesterov's method. For example, proximal gradient is written
\BEAS
y_{k+1} & = & x_k - h \nabla f(x_k)\\
x_{k+1} & = & \prox_{h\Omega}(y_{k+1}).
\EEAS
After expansion, we get
\[
x_{k+1} = y_{k+1} - h \nabla \Omega(x_{k+1}) = x_k + h g(x_k)+h\omega(x_{k+1}),
\]
which corresponds to the IMEX method with polynomials
\[
\rho(z) = -1+z,\quad \sigma(z) = 1,\quad \gamma(z) = z.
\]
However, for Fista and Nesterov's method, we need to use a variant of linear multi-step algorithms, called \emph{one leg} methods \citep{dahlquist1983one,zhang2016stability}. Instead of combining the gradients, the idea is to compute $g$ at a linear combination of the previous points, i.e.
\[
\rho(E) x_k = h\left( g(\sigma(E)x_k) + \omega(\gamma(E)x_k) \right).
\]
Their analysis (convergence, consistency, interpretation of $h$, etc...) is slightly different from linear multi-step method, so we will not go into details in this paper, but the correspondence still holds.

\subsection{Non-smooth gradient flow \label{ssec:nonsmooth_gradflow}}
In the last subsection we assumed that $\omega$ comes from a potential. However in the optimization literature, composite problems have a smooth convex part and a non-smooth sub-differentiable convex part which prevents us from interpreting the problem with the gradient flow ODE. 
Non-smooth convex optimization problems can be treated with differential inclusions (see \citep{BolteDL07} for recent results on it) 
\[
	\dot{x}(t) + \partial f(x(t))  \ni 0,
\]
where $f$ is a sub-differentiable function whose sub-differential at $x$ is written $\partial f(x)$. Composite problems \eqref{eq:composite} can then be seen as the discretization of the differential inclusion
\[
	\dot{x}(t) + \nabla f(x(t)) + \partial \Omega x(t) \ni 0.
\]

\section{Mirror gradient descent and non-Euclidean gradient flow}
\label{sec:noneuclidean}
In many optimization problems, it is common to replace the Euclidean geometry with a distance-generating function called $d(x)$, with the associated Bregman divergence
\[
\mathcal{B}_d(x,y) = d(x)-d(y)-\scalar{\nabla d(y)}{x-y},
\]
with $d$ strongly-convex and lower semi-continuous. To take into account this geometry we consider the Non-Euclidean Gradient Flow \citep{krichene2015accelerated}
\begin{align}
\begin{split}
\dot y(t) &= -\nabla f \left( x(t) \right)  \\
x(t) &= \nabla d^* (y(t))\\
x(0) &= x_0, \; y(0) = \nabla d(x_0).
\end{split}
\tag{NEGF} \label{eq:negf}
\end{align}
Here $\nabla d$ maps primal variables to dual ones and, as $d$ is strongly convex, $(\nabla d)^{-1} = \nabla d^*$, where $d^*$ is the Fenchel conjugate of $d$. In fact, we can write \eqref{eq:negf} using only one variable $y$, but this formulation has the advantage to exhibit both primal and dual variables $x(t)$ and $y(t)$. Applying the forward Euler's explicit method we get the following recurrent equation
\[
y_{k+1} - y_k = - h \nabla f ( x_k),\quad x_{k+1} = \nabla d^* y_{k+1}.
\]
Now consider the mirror gradient scheme :
\[
x_{k+1} = \argmin_x \; h\scalar{\nabla f(x_k)}{x} + \mathcal{B}_h(x,x_k).
\]
First optimality condition reads
\[
\nabla_{x} \left( h\scalar{\nabla f(x_k)}{x} + \mathcal{B}_h(x,x_k)\right)\big|_{x=x_{k+1}} = h\nabla f(x_{k}) + \nabla d(x_{k+1}) - \nabla d(x_{k}) = 0
\]
Using that $(\nabla d)^{-1} = \nabla d^*$ we get
\[
h\nabla f( x_k ) + y_{k+1} - y_{k} = 0, \quad x_{k+1} = \nabla d^* y_{k+1},
\]
which is exactly Euler's explicit method defined in \eqref{eq:negf}.

\section{Universal gradient descent and generalized gradient flow}
\label{sec:generalized}
Consider the Generalized Gradient Flow, which combines the ideas of \eqref{eq:mixed_ode} and \eqref{eq:negf},
\begin{align}
\begin{split}
\dot y(t) &= -\nabla f(x(t)) - \nabla \Omega (x(t))  \\
x(t) &= \nabla d^*(y(t))\\
x(0) &= x_0,\; y(0) = \nabla d(x_0).
\end{split}
\tag{GGF} \label{eq:ggf}
\end{align}
We can write its ODE counterpart, called the "Generalized ODE",
\begin{align}
\begin{split}
\dot y(t) &= g(x(t)) + \omega(x(t)) \\
x(t) &= \nabla d^*(y(t)), \\
x(0) &= x_0, \; y(0) = \nabla d(x_0).
\end{split}
\tag{GODE} \label{eq:gode}
\end{align}
where $g = -\nabla f$, with $f$ a smooth convex function, $d$ a strongly convex and semi-continuous distance generating function and $\omega = -\nabla \Omega$, where $\Omega$ is a simple convex function. If  $\Omega$ is not differentiable we can consider the corresponding differential inclusion as presented in Section \ref{ssec:nonsmooth_gradflow}. Here we focus on \eqref{eq:gode} and \eqref{eq:ggf} to highlight the links with integration methods. The discretization of this ODE is able to generate many algorithms in many different settings. For example, consider the IMEX explicit-implicit Euler's method,
\[
\frac{y_{k+1}-y_{k}}{h} = g(x_k) + \omega (\nabla d^*(y_{k+1})), \quad x_{k+1} = \nabla d^*(y_{k+1}),
\]
which can be decomposed into three steps,
\begin{align}
z_{k+1} & = y_k + h g \left( x_k \right) & \text{(Gradient step in  dual space)}, \nonumber\\
y_{k+1} & = \prox_{h(\Omega\circ\nabla d^*)}\left( z_{k+1} \right) & \text{(Projection step in  dual space)},\label{eq:generalized_euler_discretization}\\
x_{k+1} & = \nabla d^*(y_{k+1}) & \text{(Mapping back in primal space)}. \nonumber
\end{align}
Now consider the universal gradient method scheme presented by \citet{nesterov2015universal}:
\[
x_{k+1} = \arg\min_x \scalar{\nabla f(x_k)}{x-x_{k}} + \Omega(x) + \mathcal{B}_d(x,x_k).
\]
Again we can show that both recursions are the same: if we write the first optimality condition,
\BEAS
0 & = & \nabla_x\left(h\scalar{\nabla f(x_k)}{x-x_{k}} + h\Omega(x) + \mathcal{B}(x,x_k)\right)\big|_{x=x_{k+1}} \\
& = & hg(x_k) + h \partial \Omega(x_{k+1}) + \nabla d(x_{k+1}) - \nabla d(x_k) \\
& = & \underbrace{hg(x_k) - y_k}_{=z_{k+1}} + h \partial \Omega(\nabla d^* (y_{k+1})) - y_{k+1}.
\EEAS
We thus need to solve the non-linear system of equations
\[
y_{k+1} = z_{k+1} + h \partial \Omega(\nabla d^* (y_{k+1})),
\]
which is equivalent to the projection step \eqref{eq:generalized_euler_discretization}. Then we simply recover $x_{k+1}$ by applying $\nabla d^*$ on $y_{k+1}$.

\section{Conclusion and future works}
\label{sec:conclusion}
We connected several optimization algorithms to multi-step integration methods in numerical analysis. By using the theory of linear multi-step methods on the basic gradient flow equation, we recover Polyak's and Nesterov's method using some optimality arguments.
This provides an intuitive interpretation for the design of these methods, with optimal step sizes giving a direct explanation of the acceleration phenomenon. Our approach generalizes to more structured problems by introducing the appropriate integration method and/or looking at a generalized gradient flow equation that takes into account the geometry of the problem. 

We described a simple interpretation of the acceleration phenomenon, but our analysis is still restricted to quadratic problems. The study of $G$-stability \citep{dahlquist1978g,butcher2006thirty} may generalize our approach to smooth strongly convex functions. For the non-strongly convex case, the dependence in $k$ of Nesterov's algorithm makes its links with integration methods less clear. 

Finally the study of the links between optimization and numerical methods may provide new algorithms for both fields. Runge-Kutta methods (another way to integrate differentials equations) may lead to newer algorithms in optimization. Conversely, Nesterov's algorithm may lead to new integration methods to integrate the gradient flow equation of a convex function.

\section*{Acknowledgment}
The research leading to these results has received funding from the European Union's Seventh Framework Programme (FP7-PEOPLE-2013-ITN) under grant agreement n$^\text{o}$ 607290 SpaRTaN, as well as support from ERC SIPA and the chaire {\em \'Economie des nouvelles donn\'ees} with the {\em data science} joint research initiative with the {\em fonds AXA pour la recherche}.

{
	\small
	\bibliography{Integration_Methods_and_Accelerated_Optimization_Algorithms}

\begin{thebibliography}{25}
\providecommand{\natexlab}[1]{#1}
\providecommand{\url}[1]{\texttt{#1}}
\expandafter\ifx\csname urlstyle\endcsname\relax
  \providecommand{\doi}[1]{doi: #1}\else
  \providecommand{\doi}{doi: \begingroup \urlstyle{rm}\Url}\fi

\bibitem[Allen~Zhu and Orecchia(2017)]{AllenZhu2014LinearCA}
Zeyuan Allen~Zhu and Lorenzo Orecchia.
\newblock Linear coupling: An ultimate unification of gradient and mirror
  descent.
\newblock In \emph{Proceedings of the 8th Innovations in Theoretical Computer
  Science}, ITCS 17, 2017.

\bibitem[Ascher et~al.(1995)Ascher, Ruuth, and Wetton]{ascher1995implicit}
Uri~M Ascher, Steven~J Ruuth, and Brian~TR Wetton.
\newblock Implicit-explicit methods for time-dependent partial differential
  equations.
\newblock \emph{SIAM Journal on Numerical Analysis}, 32\penalty0 (3):\penalty0
  797--823, 1995.

\bibitem[Beck and Teboulle(2003)]{beck2003mirror}
Amir Beck and Marc Teboulle.
\newblock Mirror descent and nonlinear projected subgradient methods for convex
  optimization.
\newblock \emph{Operations Research Letters}, 31\penalty0 (3):\penalty0
  167--175, 2003.

\bibitem[Ben-Tal and Nemirovski(2001)]{ben2001lectures}
Aharon Ben-Tal and Arkadi Nemirovski.
\newblock \emph{Lectures on modern convex optimization: analysis, algorithms,
  and engineering applications}.
\newblock SIAM, 2001.

\bibitem[Bolte et~al.(2007)Bolte, Daniilidis, and Lewis]{BolteDL07}
J{\'e}r{\^o}me Bolte, Aris Daniilidis, and Adrian~S. Lewis.
\newblock The {{\L}ojasiewicz} inequality for nonsmooth subanalytic functions
  with applications to subgradient dynamical systems.
\newblock \emph{SIAM Journal on Optimization}, 17\penalty0 (4):\penalty0
  1205--1223, 2007.

\bibitem[Bubeck et~al.(2015)Bubeck, Tat~Lee, and Singh]{Bubeck15}
S.~Bubeck, Y.~Tat~Lee, and M.~Singh.
\newblock A geometric alternative to nesterov's accelerated gradient descent.
\newblock \emph{ArXiv e-prints}, jun 2015.

\bibitem[Butcher(2006)]{butcher2006thirty}
JC~Butcher.
\newblock Thirty years of g-stability.
\newblock \emph{BIT Numerical Mathematics}, 46\penalty0 (3):\penalty0 479--489,
  2006.

\bibitem[Dahlquist(1978)]{dahlquist1978g}
Germund Dahlquist.
\newblock G-stability is equivalent toa-stability.
\newblock \emph{BIT Numerical Mathematics}, 18\penalty0 (4):\penalty0 384--401,
  1978.

\bibitem[Dahlquist(1983)]{dahlquist1983one}
Germund Dahlquist.
\newblock On one-leg multistep methods.
\newblock \emph{SIAM journal on numerical analysis}, 20\penalty0 (6):\penalty0
  1130--1138, 1983.

\bibitem[Duchi et~al.(2010)Duchi, Shalev-Shwartz, Singer, and
  Tewari]{duchi2010composite}
John~C Duchi, Shai Shalev-Shwartz, Yoram Singer, and Ambuj Tewari.
\newblock Composite objective mirror descent.
\newblock In \emph{COLT}, pages 14--26, 2010.

\bibitem[Frank et~al.(1997)Frank, Hundsdorfer, and Verwer]{frank1997stability}
Jason Frank, Willem Hundsdorfer, and JG~Verwer.
\newblock On the stability of implicit-explicit linear multistep methods.
\newblock \emph{Applied Numerical Mathematics}, 25\penalty0 (2-3):\penalty0
  193--205, 1997.

\bibitem[Gautschi(2011)]{gautschi2011numerical}
Walter Gautschi.
\newblock \emph{Numerical analysis}.
\newblock Springer Science \& Business Media, 2011.

\bibitem[Krichene et~al.(2015)Krichene, Bayen, and
  Bartlett]{krichene2015accelerated}
Walid Krichene, Alexandre Bayen, and Peter~L Bartlett.
\newblock Accelerated mirror descent in continuous and discrete time.
\newblock In \emph{Advances in neural information processing systems}, pages
  2845--2853, 2015.

\bibitem[Lessard et~al.(2016)Lessard, Recht, and Packard]{lessard2016analysis}
Laurent Lessard, Benjamin Recht, and Andrew Packard.
\newblock Analysis and design of optimization algorithms via integral quadratic
  constraints.
\newblock \emph{SIAM Journal on Optimization}, 26\penalty0 (1):\penalty0
  57--95, 2016.

\bibitem[Nesterov(1983)]{nesterov1983method}
Yurii Nesterov.
\newblock A method of solving a convex programming problem with convergence
  rate o (1/k2).
\newblock In \emph{Soviet Mathematics Doklady}, volume~27, pages 372--376,
  1983.

\bibitem[Nesterov(2013)]{nesterov2013introductory}
Yurii Nesterov.
\newblock \emph{Introductory lectures on convex optimization: A basic course},
  volume~87.
\newblock Springer Science \& Business Media, 2013.

\bibitem[Nesterov(2015)]{nesterov2015universal}
Yurii Nesterov.
\newblock Universal gradient methods for convex optimization problems.
\newblock \emph{Mathematical Programming}, 152\penalty0 (1-2):\penalty0
  381--404, 2015.

\bibitem[Nesterov et~al.(2007)]{nesterov2007gradient}
Yurii Nesterov et~al.
\newblock Gradient methods for minimizing composite objective function, 2007.

\bibitem[Polyak(1964)]{polyak1964some}
Boris~T Polyak.
\newblock Some methods of speeding up the convergence of iteration methods.
\newblock \emph{USSR Computational Mathematics and Mathematical Physics},
  4\penalty0 (5):\penalty0 1--17, 1964.

\bibitem[Rockafellar(1976)]{rockafellar1976monotone}
R~Tyrrell Rockafellar.
\newblock Monotone operators and the proximal point algorithm.
\newblock \emph{SIAM journal on control and optimization}, 14\penalty0
  (5):\penalty0 877--898, 1976.

\bibitem[Su et~al.(2014)Su, Boyd, and Candes]{su2014differential}
Weijie Su, Stephen Boyd, and Emmanuel Candes.
\newblock A differential equation for modeling nesterov’s accelerated
  gradient method: Theory and insights.
\newblock In \emph{Advances in Neural Information Processing Systems}, pages
  2510--2518, 2014.

\bibitem[S{\"u}li and Mayers(2003)]{suli2003introduction}
Endre S{\"u}li and David~F Mayers.
\newblock \emph{An introduction to numerical analysis}.
\newblock Cambridge university press, 2003.

\bibitem[Wibisono et~al.(2016)Wibisono, Wilson, and
  Jordan]{wibisono2016variational}
Andre Wibisono, Ashia~C Wilson, and Michael~I Jordan.
\newblock A variational perspective on accelerated methods in optimization.
\newblock \emph{Proceedings of the National Academy of Sciences}, page
  201614734, 2016.

\bibitem[Wilson et~al.(2016)Wilson, Recht, and Jordan]{wilson2016lyapunov}
Ashia~C Wilson, Benjamin Recht, and Michael~I Jordan.
\newblock A lyapunov analysis of momentum methods in optimization.
\newblock \emph{arXiv preprint arXiv:1611.02635}, 2016.

\bibitem[Zhang and Xiao(2016)]{zhang2016stability}
Gengen Zhang and Aiguo Xiao.
\newblock Stability and convergence analysis of implicit--explicit one-leg
  methods for stiff delay differential equations.
\newblock \emph{International Journal of Computer Mathematics}, 93\penalty0
  (11):\penalty0 1964--1983, 2016.

\end{thebibliography}
}

\end{document}